\title{Euler characteristics of generalized Haken manifolds}
\author{Michael W. Davis}
\address{Department of Mathematics,
The Ohio State University,
231 W. 18th Ave.,
Columbus, Ohio 43210}
\email{davis.12@math.osu.edu}
\urladdr{}
\author{Allan L. Edmonds}
\address{Department of Mathematics, Indiana University, 831 E. 3rd St., Bloomington, IN 47401}
\email{edmonds@indiana.edu}
\subjclass[2010]{57N65, 05E45, 57Q99, 57N80 } 
\keywords{Charney-Davis Conjecture, Euler characteristic, Haken manifold, hierarchy, orbifold, flag triangulation, generalized homology sphere, boundary pattern, aspherical manifold}
 \theoremstyle{plain} 
 \newtheorem{theorem}{Theorem}[section]
 \newtheorem{conjecture}[theorem]{Conjecture}
  \newtheorem{signconj}[theorem]{Euler Characteristic Sign Conjecture}
\newtheorem{proposition}[theorem]{Proposition}
\newtheorem{corollary}[theorem]{Corollary}
\newtheorem{lemma}[theorem]{Lemma}
\theoremstyle{remark}
\newtheorem{remark}[theorem]{Remark}
\newtheorem{examples}[theorem]{Examples}
 \newcommand{\wt}{\widetilde}
\newcommand{\wh}{\widehat}
\newcommand{\cu}{\mathcal {U}} 
\newcommand{\rr}{{\mathbb R}} 
\newcommand{\zz}{{\mathbb Z}} 
\newcommand{\gs}{\sigma} 
\newcommand{\gl}{\lambda}
\newcommand{\gG}{\Gamma} 
\newcommand{\chorb}{\chi^{\mathrm{orb}}}
\newcommand{\lk}{\operatorname{Lk}}
\newcommand{\GHS}{\operatorname{GHS}}
\newcommand{\cone}{\operatorname{Cone}}
\newcommand{\codim}{\operatorname{codim}}
\newcommand{\cat}{{\mathrm{CAT}}}
\newcommand{\comment}[1]{}
\begin{document}

\begin{abstract}    
Haken $n$--manifolds have been defined and studied by B. Foozwell and H. Rubinstein in analogy with the classical Haken manifolds of dimension 3, based upon the the theory of boundary patterns developed by K. Johannson. The Euler characteristic of a Haken manifold is analyzed and shown to be equal to the sum of the Charney-Davis invariants of the duals of the boundary complexes of the $n$--cells at the end of a hierarchy. These dual complexes are shown to be flag complexes. It follows that the Charney-Davis Conjecture is equivalent to the Euler Characteristic Sign Conjecture for Haken manifolds. Since the Charney-Davis invariant of a flag simplicial 3--sphere is known to be non-negative  it follows that a closed Haken $4$--manifold has non-negative Euler characteristic. These results hold as well for generalized Haken manifolds whose hierarchies can end with compact contractible manifolds rather than cells.
\end{abstract}
\maketitle


\section{Introduction} \label{s:intro}
Haken $n$--manifolds, for $n>3$,  were defined and studied by B. Foozwell and H. Rubinstein \cite{Foozwell2007, Foozwell2011, FoozwellRubinstein2011,FoozwellRubinstein2012} in analogy with the classical Haken manifolds of dimension 3, building on the notion of a boundary pattern, developed in dimension 3 by K. Johannson \cite{Johannson1979,Johannson1994}. Foozwell \cite{Foozwell2007, FoozwellRubinstein2011} proved that they are aspherical and indeed have universal covering space homeomorphic to euclidean space \cite{Foozwell2007, Foozwell2011}. 

These manifolds can be 
 endowed with a hierarchy, that is, a prescription for successively cutting open the manifold until a disjoint union of $n$--cells,  with a simple regular cell structure on the boundary induced by the cutting submanifolds. In general these Haken cells do not induce a  cell complex structure on the original manifold. Nonetheless, we make use of the hierarchy to compute the Euler characteristic of the Haken manifold in terms of the cell structure of the Haken cells at the end of the hierarchy. It turns out that the Euler characteristic is equal to the sum of the Charney-Davis invariants of the simplicial spheres dual to the simple cell structures on the Haken cells.

 A key conceptual observation is that  manifolds with boundary patterns may be viewed as right-angled orbifolds with an orbifold Euler characteristic that is invariant under the process of cutting open along a hypersurface.

 We also explain how to generalize the notion of Haken manifolds in such a way as to allow arbitrary compact contractible manifolds at the end of a hierarchy, not just cells. Such manifolds are still aspherical but allow the possibility that the universal covering need not be Euclidean space. 

We  show that the simplicial spheres dual to the boundary complexes of the associated Haken cells are flag simplicial complexes. Thus the classical Euler Characteristic Conjecture about even-dimensional closed aspherical manifolds is reduced for closed generalized Haken manifolds to the Charney-Davis Conjecture for flag generalized simplicial spheres. In particular the Euler Characteristic Conjecture holds for all closed generalized Haken 4--manifolds. An earlier and more computational proof of the latter result (in the case of ordinary Haken 4--manifolds) appears in Edmonds \cite{Edmonds2013}.

Full statements and definitions will be given subsequent sections.

In Section \ref{s:patterns} we analyze the orbifold Euler characteristic that we associate with a manifold with boundary pattern and show that it is invariant under cutting open along a %
 hypersurface. In Section \ref{s:cells} we give a combinatorial interpretation of the notion of a Haken (homotopy) cell, concluding with examples of Haken manifolds arising from $\cat(0)$ cubical manifolds. 
 In the final section (Section \ref{s:euler}) we apply the earlier results to the Euler Characteristic Sign Conjecture for even-dimensional aspherical manifolds.

\subsubsection*{Acknowledgement}The research of first author was partially supported by  NSF grant DMS 1007068.

\section{Boundary Patterns and Orbifolds }\label{s:patterns}
We begin with the most basic aspects of Haken $n$--manifolds, concentrating on manifolds with boundary patterns, deferring the full definitions of Haken cells and Haken manifolds to a subsequent section.

\subsection{Boundary patterns} A \emph{boundary pattern} for an $n$--manifold is a decomposition of the boundary into connected $(n-1)$--manifolds such that the intersection of any $k$ of them is either empty or an $(n-k)$-submanifold. The elements of the boundary pattern are called \emph{facets}. A a component of a nonempty  intersection of facets is a \emph{stratum}. The relative interior of a stratum is a \emph{pure stratum}. 
The facets are codimension-one strata. By convention each component of the manifold itself is a codimension-0 stratum.

The boundary pattern is \emph{complete} if the union of its facets is the entire boundary. All boundary patterns considered here will be complete.  Notice that each facet inherits an induced boundary pattern. We refer to the entire configuration of facets and their intersections as the \emph{boundary complex}.

The \emph{nerve} of the boundary complex is the abstract simplicial complex $L$ with a vertex
for each facet and a $(k-1)$--simplex for each nonempty $k$-fold intersection.  
(The empty simplex corresponds to the whole manifold, i.e., to the codimension-$0$ stratum.)  For each simplex $\gs$ of $L$, let $S_\gs$ denote the corresponding union of strata. 

\subsection{Simple cells and homotopy cells}\label{ss:cells}
A \emph{simple $n$--cell} is a compact $n$--manifold with boundary pattern such that each stratum  is homeomorphic to $D^{n-k}$ where $k$ is its codimension.  If  each stratum is only required to be a compact contractible manifold, then we have a (simple) \emph{homotopy $n$--cell}.  If $c$ is a simple $n$--cell, then the nerve $L_c$ of its boundary complex is called its \emph{dual}.  It is a triangulation of $S^{n-1}$.  Moreover, since the simpicial complex dual to the boundary complex of $S_\gs$ is $\lk(\gs)$ (the link of $\gs$ in $L$),  we have that $\lk(\gs)$ is homeomorphic to $S^{n-\dim\gs -1}$.  Similarly, if $M$ is a homotopy $n$--cell, then $L$ is a $(n-1)$-dimensional ``generalized homology sphere'' (abbreviated as $\GHS^{n-1}$).  (Recall that a simplicial complex $L$ is a \emph{generalized homology $(n-1)$-sphere}  if it is a polyhedral homology manifold with the same homology as $S^{n-1}$.)  

\begin{remark}
If a simplicial complex $K$ is a polyhedral homology $n$--manifold, then the link of each $p$--simplex $\gs$ is a $\GHS^{n-p-1}$.  One does not gain much by requiring $K$ to be a manifold -- there is no difference for $n\le 3$ and for $n\ge 4$ the only further requirement is that the link of each vertex be simply connected.

For a polyhedral homology $n$--manifold $K$ and $p$--simplex $\sigma\in K$, the \emph{dual cone} $D(\gs)$ to $\sigma$ is a certain subcomplex of the  barycentric subdivision $K$ which is isomorphic to the cone on the barycentric subdivision of $\lk(\gs)$.   So, $D(\gs)$ is a contractible polyhedral homology manifold with boundary.  If $\lk(\gs)$ is homeomorphic to $S^{n-p-1}$, then the  dual cone of $\gs$ is a simple $(n-p)$--cell.
\end{remark}

\begin{proposition}\label{p:dual}
Suppose a simplicial complex $L$ is a triangulation of $S^{n-1}$ and that for each simplex $\gs\in L$, $\lk(\gs)$ is homeomorphic to $S^{\codim \gs -1}$.  Then the space $\cone (L)$ naturally has  the structure of a simple $n$--cell.  \end{proposition}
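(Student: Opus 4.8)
The plan is to equip $\cone(L)$ with the boundary pattern furnished by the dual cell decomposition of $L$, and then to read off from the hypothesis on links that every stratum is a cell of the correct dimension. First I would observe that, since $L$ triangulates $S^{n-1}$, the space $C:=\cone(L)$ is homeomorphic to $D^n$; thus $C$ is a compact $n$--manifold with $\partial C=L$, and its unique codimension--$0$ stratum is $C\cong D^n$, as required.

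Next I would pass to the barycentric subdivision $L'$ and, for each vertex $v$ of $L$, take as a facet the closed dual cell $F_v:=D(v)=\overline{\st}(\wh{v},L')$, i.e.\ the full subcomplex of $L'$ spanned by the barycenters $\wh{\gt}$ of those simplices $\gt$ of $L$ with $v\in\gt$. The combinatorial input is the standard dual cell computation, partly recalled in the preceding Remark: for a simplex $\gs=\langle v_0,\dots,v_{k-1}\rangle$ of $L$ one has $\bigcap_i F_{v_i}=D(\gs)$, this intersection being empty unless the $v_i$ span a simplex of $L$, and $D(\gs)$ is simplicially isomorphic to the cone on the barycentric subdivision of $\lk(\gs)$, hence homeomorphic to $\cone(\lk(\gs))$. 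In particular the facets $F_v$ cover $\partial C=L$ with pairwise disjoint relative interiors --- they are the top-dimensional cells of the dual cell structure on $S^{n-1}$ --- and the nerve of this boundary pattern is $L$ itself. Now I feed in the hypothesis: when $\gs$ is a $(k-1)$--simplex, $D(\gs)$ is a $k$--fold intersection of facets and so has codimension $k$ in $C$, whereas $\codim\gs=(n-1)-(k-1)=n-k$ measured inside $L$; the hypothesis therefore gives $\lk(\gs)\cong S^{n-k-1}$ and hence $D(\gs)\cong\cone(S^{n-k-1})\cong D^{n-k}$. So each facet $F_v\cong\cone(S^{n-2})\cong D^{n-1}$ is a connected $(n-1)$--manifold, each $k$--fold intersection $D(\gs)$ is an $(n-k)$--dimensional manifold, and every stratum --- the codimension--$0$ one included --- is a ball of exactly the prescribed dimension. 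Granting the point discussed next, this is precisely the statement that $(C,\{F_v\})$ is a simple $n$--cell, built canonically from $L$, which is the force of ``naturally.''

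The step I expect to require the most care is the one place where the full hypothesis is needed rather than merely $L\cong S^{n-1}$: one must know that each dual cell $D(\gs)$ is genuinely a manifold --- in fact a ball --- and sits inside $C$ as a (locally flat) submanifold, so that the literal conditions defining a boundary pattern are met. Were some link $\lk(\gs)$ not a sphere while $L$ nonetheless remained a topological $(n-1)$--sphere --- as happens for double-suspension triangulations --- the cone $\cone(\lk(\gs))$ would in general fail even to be a manifold and the dual cell description would collapse; the hypothesis is exactly what prevents this, forcing all closed stars and all dual cells to be honest cells with the standard local picture along each stratum. Granting that, the remaining verifications are the bookkeeping above, where the only genuine subtlety is to keep the two notions of codimension apart: that of a simplex inside $L$ versus that of the corresponding stratum inside $C$.
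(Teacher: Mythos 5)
Your argument is correct and is essentially the paper's own (sketched) proof: you build the boundary pattern on $\cone(L)$ from the dual cones $D(\gs)$ in the barycentric subdivision $L'$, with facets the closed stars in $L'$ of the vertices of $L$, and use the hypothesis on links to identify each $D(\gs)\cong \cone(\lk(\gs))$ with a ball of the right codimension. The paper states this only as a sketch; your write-up just supplies the bookkeeping it leaves implicit.
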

\begin{proof}[Sketch of proof]
If $L'$ denotes the barycentric subdivision of $L$, then for each simplex $\gs \in L$ there is a subcomplex $D(\gs)$ of $L'$ called the ``dual cone'' of $\gs$ which is homeomorphic to a face of a simple cell structure on $\cone (L')$.  (In particular, each facet is the closed star in $L'$ of a vertex of $L$.)
\end{proof}
\begin{proposition}
Suppose a simplicial complex $L$ is a $\GHS^{n-1}$.  Then there is a simple homotopy $n$--cell $c$ such that the nerve of its boundary complex is $L$.  Moreover, $c$ is unique up to a strata-preserving homeomorphism.
\end{proposition}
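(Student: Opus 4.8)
The plan is to reproduce, in the polyhedral-homology-manifold category, the dual-cone construction behind Proposition~\ref{p:dual}, and then to promote it to a uniqueness statement by an induction on dimension. For existence, let $L'$ be the barycentric subdivision of $L$ and set $c:=\cone(L')$, a compact contractible polyhedron of dimension $n$. For a vertex $v$ of $L$ (which is also a vertex of $L'$) let the facet attached to $v$ be the closed star $\st(v,L')$, and for a $k$--simplex $\gs=\{v_0,\dots,v_k\}$ of $L$ let the corresponding stratum be $\st(v_0,L')\cap\dots\cap\st(v_k,L')$; this intersection is the dual cone $D(\gs)$, canonically identified with $\cone(\lk(\gs,L)')$. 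Since $L$ is a $\GHS^{n-1}$, hence a polyhedral homology $(n-1)$--manifold, the Remark gives that $\lk(\gs,L)$ is a $\GHS^{n-k-2}$, so $D(\gs)$ is a contractible polyhedral homology manifold of dimension $n-k-1$, i.e.\ of codimension $k+1$ in $c$. Hence $c$ is a simple homotopy $n$--cell. It then remains only to identify the nerve of this boundary pattern with $L$, which comes down to the combinatorial fact that $\{v_0,\dots,v_k\}$ spans a simplex of $L$ exactly when $\st(v_0,L')\cap\dots\cap\st(v_k,L')\ne\emptyset$ (the barycenter $\hat\gs$ witnessing one direction), together with the corresponding matching of intersection posets.

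For uniqueness I would prove, by induction on $n$ (the base case $n\le 1$ being immediate), that every simple homotopy $n$--cell $c$ with nerve $L$ is strata-preservingly homeomorphic to the model $\cone(L')$. Assuming the statement in dimensions below $n$: each facet $F_v$ of $c$ is a simple homotopy $(n-1)$--cell with nerve $\lk(v,L)$ (immediate from the definitions, since the strata of $F_v$ are strata of $c$ and hence contractible), so $F_v$ is strata-preservingly homeomorphic to $\cone(\lk(v,L)')$; likewise each stratum $S_\gs$ with $\gs\ne\emptyset$ is strata-preservingly homeomorphic to $\cone(\lk(\gs,L)')$. Assembling these identifications compatibly --- which is itself an induction over the codimension of the strata of $\partial c$, again fed by the inductive hypothesis --- yields a strata-preserving homeomorphism $\partial c\to L'$ onto the boundary of $\cone(L')$. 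Since every stratum of $c$ other than $c$ itself lies in $\partial c$, it now suffices to extend this homeomorphism over $c$; equivalently, uniqueness reduces to the single assertion that a simple homotopy $n$--cell is the cone on its boundary pattern, i.e.\ that $c\cong\cone(\partial c)$ by a homeomorphism fixing $\partial c$.

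I expect this last assertion to be the main obstacle: it is where the contractibility of $c$ must be used in an essential, non-formal way, since one has to exclude ``exotic fillings'' of the generalized homology sphere $\partial c$ that are subordinate to the prescribed boundary pattern. For genuine cells it is trivial, as $D^n=\cone(S^{n-1})$ (essentially the content of Proposition~\ref{p:dual}). In general my plan is to fix a triangulation of $c$ in which $\partial c$ appears as $L'$ and every stratum is a full subcomplex, and then to combine the intrinsic nature of the dual-cone decomposition of a polyhedral homology manifold with the inductive hypothesis applied to the proper strata, so as to build --- along a contraction of $c$ --- a strata-preserving homeomorphism $c\cong\cone(\partial c)$ rel $\partial c$. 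Granting this, coning the boundary homeomorphism $\partial c\to L'$ completes the inductive step, and with it both the realization of $L$ by a simple homotopy $n$--cell and the uniqueness of that cell up to strata-preserving homeomorphism.
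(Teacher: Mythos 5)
Your existence argument has a genuine gap: it proves Proposition~\ref{p:dual}, not this proposition. When $L$ is only a $\GHS^{n-1}$, the space $\cone(L')$ is merely a polyhedral homology manifold; at the cone point (and along any dual cone $D(\gs)$ whose link $\lk(\gs,L)$ is a non--simply-connected homology sphere) it is not locally Euclidean, so it is not a compact $n$--manifold, and its strata are only contractible homology manifolds rather than contractible manifolds. By definition a simple homotopy $n$--cell must be an actual compact manifold with actual compact contractible manifolds as strata, so setting $c:=\cone(L')$ does not produce one. This is exactly the point the paper's proof addresses: using the theorem that every homology $m$--sphere bounds a compact contractible $(m+1)$--manifold (Kervaire for $m\ne 3$, Freedman--Quinn for $m=3$), one builds a Sullivan/Cohen-style \emph{resolution} of $\cone(L')$, i.e.\ a homotopy $n$--cell $c$ together with a cell-like map $c\to\cone(L')$ carrying each face of $c$ onto the dual cone of the corresponding simplex. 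Your construction supplies the homology-manifold model that gets resolved, but omits the resolution, which is the actual content of existence.

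The uniqueness part also rests on a false reduction. You propose to show that every simple homotopy $n$--cell with nerve $L$ is strata-preservingly homeomorphic to $\cone(L')$, and in particular that $c\cong\cone(\partial c)$ rel $\partial c$. If $\partial c$ (or the boundary of some stratum) is a homology sphere with nontrivial fundamental group, the cone on it is not a manifold at the cone point while $c$ is a manifold, so no such homeomorphism exists; there is no way to ``exclude exotic fillings'' because the cone is generally not even among the manifold fillings. The correct statement is uniqueness among manifold realizations, and the paper obtains it from the topological $h$-cobordism theorem (valid in every dimension) together with the $3$-dimensional Poincar\'e Conjecture: two compact contractible manifolds with identified boundaries are homeomorphic rel boundary, and this is applied stratum by stratum. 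Your inductive scheme of matching facets and lower-dimensional strata is compatible with that argument, but the final filling step must invoke the $h$-cobordism/Poincar\'e input rather than a cone structure on $c$.
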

\begin{proof}[Sketch of  proof]
Using different terminology the proof of this is explained in Theorem 2.2 of \cite{Davis2013}.  The main ingredient in the proof is the fact that every homology $m$-sphere bounds a contractible $(m+1)$--manifold \cite{Kervaire1969} (for $m\ne 3$).  (For $m=3$ this uses \cite{FreedmanQuinn1990}.)  Using this fact one constructs a ``resolution'' of $\cone (L)$ as in Sullivan \cite{Sullivan1971} (also compare \cite{Cohen1970}).  The result is a homotopy $n$--cell c, together with a cell-like map $c\to \cone (L')$ which takes each face of $c$ to the dual cone of its corresponding simplex.  The last sentence (uniqueness) follows from the $3$-dimensional Poincar\'e Conjecture and the fact that the topological $h$-cobordism theorem is true in evey dimension.
\end{proof}

\subsection{Right-angled orbifolds}\label{ss:rar}
An orbifold is  \emph{right-angled} if it is locally modeled on the action of $(\zz/2)^n$ on $\rr^n$ by reflections across the coordinate hyperplanes.
A manifold $M$ with boundary pattern naturally has the structure of a right-angled orbifold $O(M)$.  Each pure facet has local group $\zz/2$ and each pure stratum of codimension $k$ has local group $(\zz/2)^k$.  

Given $M$ a manifold with boundary pattern, we can calculate the orbifold Euler characteristic $\chorb(O(M))$ of the associated orbifold by assigning a weight of $(1/2)^k$ to each pure stratum of codimension $k$:
	\begin{equation}\label{e:chorb}
	\chorb(O(M))=\sum (1/2)^{\codim S} \chi(S,\partial S)
	\end{equation} 
where the sum is over all strata $S$ and where, as usual, the relative Euler characteristic is given by $\chi(S, \partial S)=\chi(S)-\chi(\partial S)$.  By Poincar\'e duality, $\chi(S,\partial S) = (-1)^{\dim S}\chi(S)$, so \eqref{e:chorb} can be rewritten as
	\begin{equation}\label{e:chorb2}
	\chorb(O(M))=(-1)^n\sum (-1/2)^{\codim S} \chi(S)).
	\end{equation}	
For example if $M$ is an $n$--cube, $n\ge 1$, with its natural boundary pattern, then $\chorb(O(M))=0$.
	
When $n$ is odd (and the boundary pattern is complete), $\chorb(O(M))=0$.  
\begin{proposition}[Manifold doubles]\label{p:doubles}
Suppose $O$ is a right-angled orbifold with $l$ facets.  Then there is a closed manifold $\widehat{O}$ and an action of $(\zz/2)^l$ on $\widehat{O}$ with quotient orbifold $O$.  
\end{proposition}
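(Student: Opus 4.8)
Write $W=(\zz/2)^l$, with standard generators $e_1,\dots ,e_l$; the plan is to realize $\widehat{O}$ as a ``basic construction'' $\cu(W,X)$ of reflection group theory. First I would pass to the underlying space: a right-angled $n$--orbifold $O$ has underlying space a compact manifold with corners $X=|O|$, its $l$ facets $F_1,\dots ,F_l$ decompose $\partial X$, and the orbifold structure is recorded by the rule that a point in the relative interior of a codimension-$k$ stratum lies on exactly $k$ of the facets and has local group $(\zz/2)^k$ acting as the standard reflection group on $\rr^k\times\rr^{n-k}$. This is built into the definition of a right-angled orbifold: such a point is precisely one whose local group is $(\zz/2)^k$, and the $k$ coordinate hyperplanes of the local model are the germs of the $k$ facets through it.

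Next I would define $\widehat{O}$ directly. For $x\in X$ let $W_x\leq W$ be the subgroup generated by $\{e_i : x\in F_i\}$, and set
	\[
	\widehat{O}\;=\;(W\times X)/{\sim},\qquad (g,x)\sim(g',x)\iff g^{-1}g'\in W_x,
	\]
with $W$ acting on the left factor. Concretely $\widehat{O}$ is obtained by gluing $2^l$ copies of $X$, one for each element of $W$, with the copies $g\cdot X$ and $ge_i\cdot X$ identified along $F_i$.

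The heart of the argument is to check that $\widehat{O}$ is a closed manifold; everything else is then formal. This is a local question at a point of $\widehat{O}$ lying over a point $x$ in the relative interior of a stratum $S$ of codimension $k$: there a neighborhood of $x$ in $X$ is modeled on $[0,\infty)^k\times\rr^{n-k}$, the $k$ facets through $x$ appearing as the coordinate walls $\{x_i=0\}$, while the facets avoiding $x$ contribute nothing locally; hence a neighborhood of the point in $\widehat{O}$ is modeled on the union of the $2^k$ copies of $[0,\infty)^k\times\rr^{n-k}$ indexed by $W_x\cong(\zz/2)^k$, which is exactly $(\zz/2)^k$ acting on $\rr^k\times\rr^{n-k}=\rr^n$ by coordinate reflections. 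So $\widehat{O}$ is an $n$--manifold, and it is closed because $X$ is compact and $\partial X$ is covered by the facets. Finally, the action of $W$ on $\widehat{O}$ has quotient space $X$, with isotropy $W_x\cong(\zz/2)^k$ at the image of $x$ acting as the local reflection group, so the quotient orbifold is $O$ itself. I expect the only real obstacle to be the bookkeeping in this local model --- extracting from the definitions that exactly $\codim S$ facets meet in standard orthant position along the relative interior of each stratum $S$, and checking that the resulting charts are compatible on overlaps --- after which both the manifold property and the identification of the quotient fall out. (This is the classical ``reflection trick''; compare the basic construction $\cu(W,X)$ in Davis's work on Coxeter groups.)
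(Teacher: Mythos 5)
Your proposal is correct and follows essentially the same route as the paper: both form the basic construction $\widehat{O}=(W\times X)/\!\sim$ with $(g,x)\sim(g',x)$ iff $g^{-1}g'$ lies in the subgroup generated by the reflections through the facets containing $x$, exactly as in Chapter 5 of \cite{Davis2008}, which the paper cites. The only difference is that you verify the local manifold structure directly via the $[0,\infty)^k\times\rr^{n-k}$ orthant model, whereas the paper delegates that step (and connectedness) to Propositions 10.1.10 and 5.2.4 of \cite{Davis2008}.
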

\begin{corollary}\label{c:chiorbformula}
If $O$ is a right-angled orbifold and $\widehat{O}$ is its manifold double, then 
\[
\chorb(O)=\frac{\chi(\widehat{O})}{2^l} .
\]
\end{corollary}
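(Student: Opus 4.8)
The plan is to deduce this directly from Proposition~\ref{p:doubles} together with the additivity of the Euler characteristic over the stratification of $\widehat{O}$ inherited from the one on $M$. Recall from Proposition~\ref{p:doubles} that $\widehat{O}$ is built by the standard reflection construction: it is glued together from $2^l$ copies of the underlying manifold-with-boundary-pattern $M$, indexed by $(\zz/2)^l$, the copies indexed by $w$ and $w'$ being glued along a facet $F$ exactly when $w^{-1}w'$ is the reflection dual to $F$ (and the full identification relation is generated by these). The first step is to record the multiplicity with which each stratum of $M$ appears in $\widehat{O}$: a point lying in a pure stratum of codimension $k$ (equivalently, on exactly $k$ facets) is identified with the points in the corresponding position in the $2^k$ copies obtained by applying the subgroup $(\zz/2)^k\le(\zz/2)^l$ generated by the $k$ reflections through those facets, and with no others; hence a pure stratum $\mathring S$ of codimension $k$ has exactly $2^{l}/2^{k}=2^{l-k}$ preimages in $\widehat{O}$, each mapped homeomorphically onto $\mathring S$. (This is just the statement, noted in \S\ref{ss:rar}, that the local group of a codimension-$k$ stratum of $O$ is $(\zz/2)^k$.)

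Next I would choose a CW structure on $M$ for which the boundary complex is a subcomplex — which exists since the strata constitute a manifold stratification — and lift it to a $(\zz/2)^l$-invariant CW structure on $\widehat{O}$. Each open cell of $\widehat{O}$ maps homeomorphically onto an open cell of $M$, which lies in a unique pure stratum; by the first step a cell sitting in a pure stratum of codimension $k$ has exactly $2^{l-k}$ lifts. Counting the cells of $\widehat{O}$ according to which stratum of $M$ they sit over and grouping the result gives
\begin{equation*}
\chi(\widehat{O})=\sum_{S}2^{\,l-\codim S}\Bigl(\sum_{e\subset \mathring S}(-1)^{\dim e}\Bigr)=\sum_{S}2^{\,l-\codim S}\bigl(\chi(S)-\chi(\partial S)\bigr),
\end{equation*}
the sums being over all strata $S$ of $M$, the codimension-$0$ stratum included. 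Pulling out the factor $2^l$ and comparing with the defining formula~\eqref{e:chorb} for $\chorb(O)$ yields $\chi(\widehat{O})=2^l\,\chorb(O)$, which is the assertion. (Equivalently, one can phrase the middle step as additivity of the compactly supported Euler characteristic over the locally closed decomposition of $\widehat{O}$ into lifted pure strata, using $\chi_c(\mathring S)=\chi(S,\partial S)$ and $\chi_c(\widehat{O})=\chi(\widehat{O})$.)

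Conceptually nothing deep is happening: $\widehat{O}\to O$ is an orbifold covering of degree $2^l$, $\chorb$ is multiplicative under coverings, and $\chorb$ of a closed manifold is its ordinary Euler characteristic — the computation above is simply the bookkeeping that confirms the weighted-stratum formula~\eqref{e:chorb} is consistent with this. The only place that requires care, and hence the main obstacle, is the multiplicity count of the first step — that a codimension-$k$ pure stratum lifts to exactly $2^{l-k}$ copies — since this is where the structure of the reflection construction of Proposition~\ref{p:doubles} genuinely enters; once that is in hand, the rest is routine additivity of Euler characteristics, the only minor subtlety being to treat the codimension-$0$ stratum on the same footing as the positive-codimension ones.
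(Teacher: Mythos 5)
Your argument is correct. The paper itself gives no proof of Corollary~\ref{c:chiorbformula}: it is stated as an immediate consequence of the construction in Proposition~\ref{p:doubles}, and Remark~\ref{r:chiorbformula} even observes that the formula could be taken as the \emph{definition} of $\chorb(O(M))$ --- so your stratum-by-stratum count (a pure stratum of codimension $k$ has local group $(\zz/2)^k$, hence exactly $2^{l-k}$ homeomorphic lifts to $\widehat{O}$, so the weights $(1/2)^{\codim S}$ in \eqref{e:chorb} exactly compensate the multiplicities) is precisely the bookkeeping the paper leaves implicit. One small caution: for topological manifolds the existence of a CW structure having the boundary complex as a subcomplex is not automatic, so you should rely on your parenthetical variant --- additivity of the compactly supported Euler characteristic over the decomposition of $\widehat{O}$ into lifted pure strata, together with $\chi_c(\mathring S)=\chi(S,\partial S)$ --- which requires no cell structure and makes the argument airtight.
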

\begin{remark}\label{r:chiorbformula}
Given a manifold with boundary pattern $M$,  let $\widehat{M}$ ($=\widehat{O}(M)$) denote the manifold double of $O(M)$.  One could then take the formula in Corollary \ref{c:chiorbformula} as the definition of $\chorb(O(M))$. More generally if $(\zz/2)^m$ acts by reflections on a manifold $\wh{M}$ with orbifold quotient $O$, then
\[
\chorb(O)=\frac{\chi(\wh{M})}{2^m} .
\]
\end{remark}

\begin{examples}
If $M$ has empty boundary, then $\widehat{M}=M$. If $M$ has nonempty connected boundary, consisting of a single facet, then $\widehat{M}$ is the ordinary manifold double consisting of two copies of $M$ glued together along their boundary by the identity map. The manifold double of a closed interval is a circle formed out of four closed intervals suitably identified. If $M$ is  a 2--simplex 
 (a triangle), then $\widehat{M}$ is a 2--sphere tessellated by 8 right-angled spherical triangles.
\end{examples} 
\begin{proof}[Proof of Proposition \ref{p:doubles}]
This is essentially a special case of the ``basic construction'' of Chapter 5 in \cite{Davis2008}, which we outline in the present context. The facets of the Haken $n$--manifold $M$ give it a mirror structure. We label the facets $F_{s}$, $s\in S$, where $S$ is a set of cardinality $l$, viewed as the standard set of generators of the elementary abelian $2$-group $G$. For each $x\in M$ set 
\[
S(x) = \{s\in S: x\in F_{s} \}.
\]
For each nonempty subset $T\subset S$, let $G_{T}$ denote the subgroup of $G$ generated by the involutions in $T$.

Define an equivalence relation $\sim$ on $G\times M$ by setting
\[
(g,x)\sim (h,y) \text{ if and only if } x=y \text{ and } gh^{-1}\in G_{S(x)}
\]
and then set
\[
\widehat{M}  = (G\times M)/\sim.
\]
The manifold double $\widehat{M}$ in this case is denoted by $\cu(G,M)$ in \cite{Davis2008}, where this object is studied in much greater generality.
That $\widehat{M}$ is connected when $M$ is connected follows from Proposition 5.2.4 in \cite{Davis2008}.
That $\widehat{M}$ is an $n$--manifold follows from Proposition 10.1.10 in \cite{Davis2008}. The action of $G$ on $G\times M$, with orbit space $M$ clearly descends to an action of $G$ on $\widehat{M}$, with orbit space $M$.
\end{proof}

\subsection{Cutting open along a hypersurface}
We consider properly embedded, codimension-one,  submanifolds  (``hypersurfaces'') that meet the strata transversely. Such manifolds (or, more generally, maps) are called \emph{admissible}.  If we cut open along such an admissible hypersurface, the new manifold receives a boundary pattern in which the normal $S^0$-bundle over the hypersurface becomes a codimension one stratum. (If the hypersurface is two-sided, the $S^0$-bundle is trivial and each component of the hypersurface contributes two facets.) The remaining facets are obtained by cutting open the original facets along the boundary of the hypersurface.  If $M$ is the manifold with boundary pattern and $F$ is the hypersurface, then denote by $M\odot F$ the result of cutting $M$ open along $F$.

\begin{lemma}\label{l:main}
Suppose that $M'=M\odot F$ is the result of cutting $M$ open along a hypersurface.  Then
\[
\chorb(O(M'))=\chorb(O(M)).
\]
\end{lemma}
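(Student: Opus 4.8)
The plan is to compute both sides directly from the defining formula \eqref{e:chorb} and to match the strata of $M'=M\odot F$ against those of $M$. Since cutting open is a modification localized near $F$, and \eqref{e:chorb} is additive over strata, it is enough to treat one component of $F$ at a time; so I assume $F$ is connected. Write $E\subset M'$ for the normal $S^0$-bundle of $F$ (a double cover of $F$, trivial exactly when $F$ is two-sided); by construction the components of $E$ are the new facets of $M'$.

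First I would enumerate the strata of $M'$ in terms of those of $M$, using the collapse map $c\colon M'\to M$ that undoes the cut. If $S$ is a stratum of $M$ disjoint from $F$, then $c^{-1}(S)$ is a single stratum of $M'$ carrying the same induced boundary pattern and the same codimension as $S$; such strata contribute equally to $\chorb(O(M'))$ and $\chorb(O(M))$. If instead $G_S:=S\cap F\neq\emptyset$ — and one must remember to include here the codimension-$0$ stratum $S=M$ — then admissibility (transversality to all strata) makes $G_S$ a properly embedded hypersurface in $S$, and the strata of $M'$ lying over $S$ are exactly: the components of $S\odot G_S$, each of codimension $\codim S$; together with the components of $E_S$, the normal $S^0$-bundle of $G_S$ in $S$, each of codimension $\codim S+1$ (these are cut out by the $\codim S$ facets through $S$ together with one component of $E$). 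A check with $c$ shows that every stratum of $M'$ arises this way for exactly one $S$.

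It then remains to match contributions stratum by stratum. For $S$ with $k=\codim S$ and $G_S\neq\emptyset$, the contribution of $S$ to $\chorb(O(M))$ is $(1/2)^k\chi(S,\partial S)$, while the strata of $M'$ over $S$ contribute
\[
(1/2)^k\,\chi\bigl(S\odot G_S,\partial(S\odot G_S)\bigr)+(1/2)^{k+1}\,\chi(E_S,\partial E_S).
\]
Because $E_S\to G_S$ is a double cover one has $\chi(E_S,\partial E_S)=2\chi(G_S,\partial G_S)$, so matching the two contributions amounts to the single identity
\[
\chi\bigl(S\odot G_S,\partial(S\odot G_S)\bigr)=\chi(S,\partial S)-\chi(G_S,\partial G_S).
\]
Rewriting each relative Euler characteristic by Poincar\'e--Lefschetz duality as $\chi(X,\partial X)=(-1)^{\dim X}\chi(X)$ (as already noted after \eqref{e:chorb}), and using $\dim(S\odot G_S)=\dim S=\dim G_S+1$, this is equivalent to the elementary cut-and-count formula $\chi(S\odot G_S)=\chi(S)+\chi(G_S)$, which follows at once by triangulating $S$ with $G_S$ as a subcomplex (the simplices of $G_S$ get duplicated, all others survive once). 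This matching would then yield the lemma.

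The step I expect to be most delicate is the bookkeeping of the second paragraph: correctly listing all strata of $M'$, recognizing that the new strata $E_S$ have codimension one more than $S$, and verifying the count is exact with neither omission nor double-counting. I also note that phrasing the argument through the normal $S^0$-bundles $E$ and $E_S$, rather than through ``two copies of $F$'', is precisely what makes it uniform over the one- and two-sided cases, since the Euler-characteristic identities used are insensitive to orientability of normal bundles. An argument via manifold doubles and Corollary \ref{c:chiorbformula} looks less convenient, because cutting open changes the number of facets.
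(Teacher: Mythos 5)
Your argument is correct, but it is genuinely different from the paper's. The paper avoids all stratum bookkeeping: it first does the special case where $M$ is closed and $F$ is a closed hypersurface (there the only strata of $M'$ are $M'$ itself and the normal $S^0$-bundle $F'$, so \eqref{e:chorb} gives $\chorb(O(M'))=\chi(M',F')+\tfrac12\chi(F')=\chi(M)$ in one line), and then reduces the general case to this one by passing to the manifold double $\widehat{M}$ of Proposition \ref{p:doubles}: the group $(\zz/2)^l$ acts on $\widehat{M}\odot\widehat{F}$ with quotient orbifold $O(M')$, and the equivariant formula of Remark \ref{r:chiorbformula} together with Corollary \ref{c:chiorbformula} finishes the proof. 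Note that your closing objection to this route --- that cutting changes the number of facets --- is exactly what Remark \ref{r:chiorbformula} is formulated to absorb: the group acting on $\widehat{M}\odot\widehat{F}$ need not be generated by reflections across all facets of $M'$, only the quotient orbifold matters. Your direct approach instead matches strata of $M'$ against strata of $M$, and the crux is the enumeration you flag as delicate; it does hold, because admissibility (transversality of $F$ to every stratum) guarantees that no stratum $S$ lies in $F$, that the normal $S^0$-bundle of $G_S=S\cap F$ in $S$ is the restriction of that of $F$ in $M$, and that the new facets (components of $E$) are pairwise disjoint, so each stratum of $M'$ is either a component of $S\odot G_S$ (codimension $\codim S$) or of $E_S$ (codimension $\codim S+1$) for a unique $S$. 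Given that, your reduction to $\chi(S\odot G_S)=\chi(S)+\chi(G_S)$ and $\chi(E_S,\partial E_S)=2\chi(G_S,\partial G_S)$ is sound in both the one- and two-sided cases. What each approach buys: yours is self-contained and purely local, never invoking the doubling construction; the paper's trades the combinatorial bookkeeping for the ``basic construction,'' collapsing the verification to a two-line computation in the closed case plus formal multiplicativity of $\chorb$ under reflection group actions.
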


\begin{proof}
In the special case where $M$ is closed and $F$ is a closed submanifold, we let  $F'$ denotes the corresponding $S^0$-bundle over $F$. By \eqref{e:chorb},
\[
\chorb(O(M'))=\chi(M',F')+(1/2)\chi(F')=\chi(M)-\chi(F)+ 2(1/2)\chi(F)= \chi(M),
\]
The general case reduces to this special case by taking a $2^l$-fold cover using Corollary~\ref{c:chiorbformula}, where $l$ is the number of facets of $M$. Let $\widehat{M}$ denote  the manifold double of $O(M)$. Let $\widehat{F}$ be the pre-image of $F$ in $\widehat{M}$. Then $(\zz/2)^{l}$ acts on the manifold $\widehat{M}\odot \widehat{F}$ with orbifold quotient $O(M')$. 
  Thus 
  \begin{align*}
\chorb(O(M'))&= (1/2^{l})\chorb(O(\widehat{M}\odot \widehat{F})) \quad \text{by Remark \ref{r:chiorbformula}}\\
&=(1/2^{l})\chi(\widehat{M}) \quad \text{by the special case}\\
&=(1/2^{l})(2^{l})\chorb(O(M)) \quad \text{by Corollary \ref{c:chiorbformula}}\\
&=\chorb(O(M)).
\end{align*}
\end{proof}

\subsection{Prehierarchies}
A \emph{prehierarchy} for a compact $n$--manifold $M$ with a complete  
boundary pattern is a sequence of $n$--manifolds $M_{k}$ with complete 
boundary patterns and hypersurfaces $F_{k}$:
$$(M_0,F_0), (M_{1}, F_{1}),\dots ,(M_m,F_m)$$
where $M_{0}=M$, $M_{k+1}=M_{k}\odot F_{k}$, and $M_{m+1}$, the result of cutting $M_m$ open along $F_m$, is a disjoint union of simple homotopy $n$--cells. 

\begin{theorem}\label{t:prehier}
Suppose $(M_0,F_0), \dots , (M_m,F_m)$ is a prehierarchy for $M=M_0$.  Then \[
\chorb(O(M))=\sum_{c} \chorb(O(c))
\]
where the sum is over the homotopy $n$--cells $c$ in $M_{m+1}$.
\end{theorem}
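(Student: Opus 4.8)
The plan is to prove Theorem~\ref{t:prehier} by a straightforward induction on the length $m$ of the prehierarchy, using Lemma~\ref{l:main} as the engine. First I would observe that at each stage $M_{k+1}=M_k\odot F_k$ is obtained from $M_k$ by cutting open along the admissible hypersurface $F_k$, so Lemma~\ref{l:main} gives $\chorb(O(M_{k+1}))=\chorb(O(M_k))$ immediately. Iterating from $k=0$ to $k=m$ yields $\chorb(O(M))=\chorb(O(M_{m+1}))$.

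The second and final ingredient is additivity of the orbifold Euler characteristic over connected components. Since $M_{m+1}$ is by definition a disjoint union of simple homotopy $n$--cells $c$, and the associated right-angled orbifold $O(M_{m+1})$ is correspondingly the disjoint union of the orbifolds $O(c)$, formula~\eqref{e:chorb} (which expresses $\chorb$ as a sum of weighted relative Euler characteristics of strata, and strata of a disjoint union are just the strata of the pieces) gives $\chorb(O(M_{m+1}))=\sum_c \chorb(O(c))$. Combining the two displays completes the proof.

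There is really no hard part here; the content has all been front-loaded into Lemma~\ref{l:main}. The only point requiring a word of care is that $M_{k+1}=M_k\odot F_k$ may be disconnected even when $M_k$ is connected — cutting along a separating hypersurface produces two pieces — but this causes no trouble: Lemma~\ref{l:main} as stated is about manifolds with boundary patterns and makes no connectivity hypothesis (indeed its proof via the manifold double and Corollary~\ref{c:chiorbformula} works component-by-component), and $\chorb$ is additive over components throughout. So the induction goes through verbatim regardless of how the cuts split things.

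I would therefore write the proof in three short lines: invoke Lemma~\ref{l:main} $m+1$ times to get $\chorb(O(M))=\chorb(O(M_{m+1}))$, note that $O(M_{m+1})=\coprod_c O(c)$, and apply additivity of $\chorb$ to conclude. If desired one can phrase the induction explicitly with the statement ``$\chorb(O(M))=\sum_{c\in M_{k+1}}\chorb(O(c))$ for all $k$,'' with the base case $k=m$ being the disjoint-union observation and the inductive step $k+1\mapsto k$ being a single application of Lemma~\ref{l:main} together with the fact that cutting $M_k$ open along $F_k$ partitions the cells of $M_{m+1}$ according to which component of $M_{k+1}$ they descend from.
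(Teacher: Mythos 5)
Your proof is correct and is essentially the same as the paper's: the paper likewise applies Lemma~\ref{l:main} repeatedly to obtain $\chorb(O(M))=\chorb(O(M_{m+1}))$ and then invokes additivity of $\chorb$ under disjoint union. Your extra remarks on disconnectedness and component-wise additivity are fine elaborations of the same argument.
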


\begin{proof}
By Lemma~\ref{l:main}, $\chorb(O(M))= \chorb(O(M_{m+1}))$ and  $\chorb(O(M_{m+1}))$ is additive under disjoint union.
\end{proof}

\section{Haken Cells and Haken Manifolds}\label{s:cells}
Our goal here is to give a combinatorial characterization of Haken (homotopy) $n$--cells as having dual nerve a simplicial flag complex of dimension $n-1$. This requires delving somewhat more deeply into some of the intricacies of Haken cells.

\subsection{Useful boundary patterns}
We need to discuss the somewhat technical notion of a \emph{useful}  boundary pattern. A boundary pattern is said to be \emph{useful} if 
\begin{enumerate}
\item
whenever there is a loop in a single facet that is nullhomotopic in the manifold then it is nullhomotopic in the facet; 
\item
whenever there is a null-homotopic loop in the boundary consisting precisely of two arcs, each in distinct facets, then the loop bounds a 2-disk in the boundary meeting the intersection of the two facets in a single arc; and 
\item
whenever there is a null-homotopic loop in the boundary consisting precisely of three arcs, each in distinct facets, then the loop bounds a 2-disk in the boundary meeting the boundaries of the three facets in a single triod. 
\end{enumerate}
The slogan here is that ``small 2--disks are standard''.

Here we mainly need this notion in the case of a simply connected manifold. In this case a boundary pattern is \emph{useful} if and only if
\begin{enumerate}
\item
Each facet is simply connected;
\item
The intersection of any two facets is connected; and
\item
If three facets have pairwise nonempty intersections, then all three have nontrivial intersection.
\end{enumerate}

\subsection{Essential submanifolds}
Let $M$ be an $n$--manifold with boundary pattern.
We consider properly embedded, codimension-one submanifolds  (``hypersurfaces'') $F\subset M$ that meet the facets and their faces transversely.\footnote{Foozwell and Rubinstein only consider two-sided hypersurfaces. We allow hypersurfaces to be one-sided.}
If we cut open $M$ along such a submanifold $F$, the new manifold $M'$ inherits a natural boundary pattern in which the (one or) two components of the boundary of a tubular neighborhood of of the hypersurface become facets. The remaining facets are obtained by cutting the original facets open along the boundary of the hypersurface. Note that $F$ inherits a boundary pattern as well.

In order to ensure that we are describing an aspherical manifold, the hypersurfaces along which we cut are required to be \emph{essential}. The detailed properties required for the hypersurface  to be \emph{essential} will not concern us much here, but these properties include being injective on fundamental group, and a standard relative version of that condition.
In particular, any loop in $F$ that bounds a disk in $M$ also bounds a disk in $F$. 
In general this property ensures that the induced boundary pattern on $M\odot F$ is useful.
	See \cite{Edmonds2013,Foozwell2011,FoozwellRubinstein2011,FoozwellRubinstein2012} for more complete discussion.

\subsection{Haken cells and  Haken homotopy cells}
A  \emph{Haken homotopy $n$--cell} is defined inductively to be a topological homotopy $n$--cell with a complete useful  boundary pattern in which the facets are themselves  Haken homotopy cells. The definition in Foozwell-Rubinstein \cite{FoozwellRubinstein2011} of a Haken $n$--cell is the same except the word homotopy is omitted.  The inductive definition starts with $0$--cells, which are automatically Haken. Any closed interval with the unique complete boundary pattern is Haken. In dimension 2, a $p$-sided polygon is a Haken $2$--cell if and only if $p\ge 4$. It follows that a 2-dimensional face of a general Haken $n$--cell is a $p$-gon, with $p\ge 4$.

\subsection{Hierarchies} 
If $M$ is a manifold with  useful boundary pattern and $F\subset M$ is an essential codimension-one submanifold, then  we say that $(M,F)$ is a \emph{good pair}. 

A \emph{hierarchy} for a compact $n$--manifold $M$ with a complete useful boundary pattern is a prehierarchy
$$(M_0,F_0), (M_{1}, F_{1}),\dots ,(M_m,F_m)$$
consisting of good pairs, where each $M_k$ has a complete and useful boundary pattern and where $M_{m+1}$ is a disjoint union of Haken homotopy  $n$--cells.
By a \emph{generalized Haken $n$--manifold} we mean a compact $n$--manifold with a complete useful boundary pattern,  which admits a  hierarchy.\footnote{Foozwell and Rubinstein include a given hierarchy as part of the structure of a Haken manifold. They also require the essential codimension-one submanifolds $F_{k}$  to be two-sided. In addition these authors require that the end of the hierarchy consist of Haken $n$--cells, such that they and their faces are homeomorphic to topological cells.}

\begin{proposition}
A generalized Haken $n$-manifold is aspherical.
\end{proposition}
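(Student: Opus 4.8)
The plan is to establish asphericity by the same kind of hierarchical induction Foozwell used for ordinary Haken manifolds \cite{Foozwell2007, FoozwellRubinstein2011}: a double induction, outer on $n=\dim M$ and, for fixed $n$, inner on the number of cutting hypersurfaces in a hierarchy. Since asphericity is a property of each connected component separately, I would first reduce to the case that $M$ is connected. The outer induction is grounded at $n=0$, where a generalized Haken $0$--manifold is a point. So fix $n\ge 1$ and assume that every generalized Haken manifold of dimension less than $n$ is aspherical.

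For the inner induction the base case is that in which $M$ is already a disjoint union of Haken homotopy $n$--cells; by definition these are compact contractible manifolds, hence aspherical. (This is the only point at which the word ``homotopy'' matters: ordinary Haken cells are balls while homotopy cells are merely contractible, and for asphericity contractibility is all that is used, so the generalization costs nothing here.) For the inner step I would take a hierarchy $(M_0,F_0),\dots,(M_m,F_m)$ for $M=M_0$. Each component of $M_1=M\odot F_0$ either inherits a shorter hierarchy --- the restriction of $(M_1,F_1),\dots,(M_m,F_m)$ to it --- or, when $m=0$, is itself a Haken homotopy $n$--cell; either way the components of $M_1$ are aspherical by the inner inductive hypothesis. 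It then remains to deduce that $M$, which is recovered from $M_1$ by regluing along the facet or facets lying over $F_0$, is aspherical.

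For that final deduction I need the hypersurface $F_0$ to be aspherical and $\pi_1$-injectively embedded. The latter is part of what it means for $F_0$ to be essential. For the former I would invoke the fact that an essential hypersurface in a generalized Haken $n$--manifold, with its induced (complete, useful) boundary pattern, is again a generalized Haken $(n-1)$--manifold --- after a general position argument the hierarchy of $M$ restricts to one of $F_0$ --- so that $F_0$, and with it its connected double cover, is aspherical by the outer inductive hypothesis. Regluing $M_1$ along the facet or facets over $F_0$ then exhibits $\pi_1(M)$ as the fundamental group of a finite graph of groups: the vertex groups are the fundamental groups of the components of $M_1$ (and, when $F_0$ is one-sided, of a twisted $I$-bundle neighbourhood of $F_0$, which has the fundamental group of $F_0$), the edge groups are the fundamental groups of the components of $F_0$ or of their connected double covers, all of these are aspherical by the previous steps, and all the edge maps are $\pi_1$-injective because $F_0$ is essential. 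By the standard principle that a graph of aspherical spaces with $\pi_1$-injective attaching maps is aspherical --- the universal cover is a tree of contractible spaces over the Bass-Serre tree, hence contractible, by van Kampen, Mayer-Vietoris, and the Hurewicz and Whitehead theorems --- the manifold $M$ is aspherical. This closes both inductions.

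I expect the main obstacle to be exactly the assertion used above that an essential hypersurface in a generalized Haken $n$--manifold is itself a generalized Haken $(n-1)$--manifold with $\pi_1$-injective inclusion. That is where the genuinely technical part of the theory of useful boundary patterns and essential submanifolds --- due to Johannson in dimension three and to Foozwell-Rubinstein in general \cite{FoozwellRubinstein2011, FoozwellRubinstein2012} --- is needed: one must check that cutting along an essential hypersurface preserves usefulness of the boundary pattern, and that the remaining cutting hypersurfaces of the hierarchy can be put into general position with respect to $F_0$ so as to restrict to a hierarchy of it. Granting this, the regluing step is merely the familiar fact, also underlying the asphericity of classical Haken $3$--manifolds, that a tree of aspherical spaces glued along $\pi_1$-injective subspaces is aspherical.
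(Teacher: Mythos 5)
Your proposal is correct and follows essentially the same route as the paper: a double induction on dimension and hierarchy length, cutting along the first essential hypersurface, deducing asphericity of the cut-open manifold and of the hypersurface (via its induced hierarchy, or of its double cover in the one-sided case), and then regluing using $\pi_1$-injectivity together with the classical Whitehead-type gluing theorem for aspherical spaces. The only cosmetic difference is that you phrase the final step in graph-of-groups/Bass--Serre language, whereas the paper states the amalgamated product/HNN splitting and cites Whitehead's theorem directly.
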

\begin{proof}The proof is modeled on that of Foozwell and Rubinstein \cite{FoozwellRubinstein2011}, Theorem 3.1, with modifications to allow for one-sided hypersurfaces and for generalized Haken cells.

The proof proceeds by induction on  the dimension of the manifold and the number of steps in a hierarchy. The cases when $n=1$ or 2 follow from the classification of manifolds in these dimensions. In addition, in any dimension a Haken manifold with a hierarchy of length 1 is just a collection of contractible manifolds, hence also aspherical.

Inductively, suppose that  Haken manifolds of smaller dimension or shorter length hierarchy than those of $M$ are aspherical. We may assume that $M$ and its cutting hypersurfaces are connected. If $M$ is cut open along the first hypersurface $F$ then the result is a manifold with boundary pattern $M'$ which is a Haken manifold with shorter hierarchy. By the induction hypothesis $M'$ is aspherical. If $F$ is two-sided, then the hierarchy for $M$ induces one on $F$, so induction on dimension also shows that $F$ is aspherical. If $F$ is one-sided, then the same argument shows that a suitable connected 2-fold covering $\widetilde{F}$ of $F$, given by the boundary of a tubular neighborhood $N$ of $F$, is aspherical. It follows from covering space theory that $F$ itself is aspherical in this case as well. 

The Seifert-van Kampen theorem shows that $\pi_{1}(M)$ is  a free product with amalgamation over $\pi_{1}(F)$ in the case when $F$ is two-sided and separating, $\pi_{1}(M)$ is  an HNN extension over $\pi_{1}(\widetilde{F})$ in the case when $F$ is one-sided, and $\pi_{1}(M)$ is an HNN extension over $\pi_{1}(F)$ in the case when $F$ is two-sided but nonseparating .

Now $\pi_{1}(F)\to \pi_{1}(M)$ is injective in the two-sided case, as in \cite{FoozwellRubinstein2011}, Theorem 3.1. The same argument shows that  in the one-sided case we have $\pi_{1}(\widetilde{F})\to \pi_{1}(M)$ injective.

Thus we see that in in the nonseparating cases $M$ can be described as the union of two compact aspherical manifolds $N$ and $\overline{M-N}$ intersecting along an aspherical manifold which is $\pi_{1}$-injective  into both $N$ and $\overline{M-N}$. In the separating case we may similarly write $M=M_{1}\cup_{F} M_{2}$. A classical theorem of J.~H.~C.~Whitehead then implies that $M$ is aspherical.
\end{proof}

\subsection{Characterization of Haken homotopy  cells}
As we saw in Subsection \ref{ss:cells}, 
the boundary complex of a simple $n$--cell may be viewed as the dual complex of a simplicial $(n-1)$-sphere and that of a simple homotopy $n$--cell as the (resolved) dual complex of  a $\GHS^{n-1}$. We now look more closely at the consequences of the Haken condition.

\begin{proposition}
If $X$ is a Haken homotopy $n$--cell, then for each $k$-face $S_\sigma$ of $X$, $k\le n$, the 1-skeleton of its dual simplicial $(k-1)$-sphere contains no empty triangle. (The dual simplicial $(k-1)$-sphere to $S_\sigma$ is identified with the link 
of the $(n-k-1)$--simplex $\sigma$ corresponding to $S_\sigma$ in the simplicial dual). \end{proposition}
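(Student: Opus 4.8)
The plan is to reduce the assertion, for each face $S_\sigma$, to condition (3) in the definition of a useful boundary pattern --- in the simplified form valid for simply connected manifolds --- applied to $S_\sigma$ itself rather than to $X$. (Recall that an empty triangle in a simplicial complex means three pairwise--adjacent vertices that do not span a $2$--simplex; the claim is that the $1$--skeleton of the dual of every face contains none.)

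First I would record that every $k$--face $S_\sigma$ of a Haken homotopy $n$--cell $X$ is itself a Haken homotopy $k$--cell. This follows from the inductive definition, by induction on codimension: the facets of $X$ are Haken homotopy $(n-1)$--cells by hypothesis, and a face of codimension $j\ge 1$ lies in some facet $F$, where it is a face of codimension $j-1$ for the boundary pattern that $F$ inherits (indeed the simplicial dual of $F$ is the link in $L$ of the vertex corresponding to $F$, where $L$ is the simplicial dual of $X$). In particular $S_\sigma$ is a compact contractible $k$--manifold, hence simply connected, carrying a complete useful boundary pattern; its dual complex is $\lk(\sigma)$, the link in $L$ of the simplex corresponding to $S_\sigma$, with one vertex for each facet of $S_\sigma$ and a simplex for each nonempty intersection of a collection of those facets.

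Next I would invoke the characterization of ``useful'' for the simply connected manifold $S_\sigma$: its facets satisfy (1) each is simply connected, (2) any two that meet do so in a connected set, and (3) if three of them have pairwise nonempty intersections then all three have a common point. Conditions (1)--(2) ensure that $S_\sigma$ and its faces are single strata, so that $\lk(\sigma)$ is an honest simplicial complex and ``empty triangle in the $1$--skeleton'' has its literal meaning: three facets $F_a,F_b,F_c$ of $S_\sigma$ with $F_a\cap F_b$, $F_b\cap F_c$ and $F_a\cap F_c$ all nonempty --- so that the three edges lie in $\lk(\sigma)$ --- but with $F_a\cap F_b\cap F_c=\emptyset$, so that the filling $2$--simplex does not. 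That is exactly the configuration excluded by (3). Hence the $1$--skeleton of $\lk(\sigma)$, the dual simplicial $(k-1)$--sphere of $S_\sigma$, contains no empty triangle.

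The one step that needs genuine care --- and the one I would spell out --- is the first: that a face of a Haken homotopy cell, with its induced boundary pattern, is again a Haken homotopy cell with a complete useful boundary pattern, so that the simply--connected reformulation of usefulness legitimately applies to it and its dual complex is the corresponding link. Granting that bookkeeping, the conclusion is a direct translation of axiom (3) into the combinatorics of the nerve, with no estimates or constructions required.
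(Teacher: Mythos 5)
Your proposal is correct and follows essentially the same route as the paper: the paper's (very terse) proof likewise observes that the no-empty-triangle condition is just a translation of clause (3) of usefulness in the simply connected setting, and that the statement propagates to all faces because faces of a Haken homotopy cell are again Haken homotopy cells. Your write-up simply makes explicit the induction on codimension and the identification of the dual of $S_\sigma$ with $\lk(\sigma)$, both of which the paper uses implicitly.
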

\begin{proof}
In the case $\sigma=X$ the assertion is clear from the definition if $n\le 2$.   In general it is an interpretation of being a ``useful'' boundary pattern. Since all faces of a Haken homotopy cell are themselves Haken homotopy cells, the general result follows. 
\end{proof}

Recall that a simplicial complex in which any collection of $k+1$ pairwise adjacent vertices spans a $k$--simplex is called a \emph{flag simplicial complex}. Suggestively we think of a \emph{non-flag complex} as having a minimal \emph{empty simplex} of some dimension greater than 1, i.e., a subcomplex equivalent to the boundary of a $k$--simplex that does not actually span a $k$--simplex. 
\begin{lemma}\label{l:flaglink}
If $L$ is a flag simplicial complex and $\sigma\in L$, then $\lk(\sigma,L)$ is flag.
\end{lemma}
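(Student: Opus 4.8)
The plan is to prove this directly from the definitions of "flag" and "link," using the standard description of simplices in a link. First I would recall that for a simplicial complex $L$ and a simplex $\sigma \in L$, the link $\lk(\sigma, L)$ consists of all simplices $\tau$ of $L$ that are disjoint from $\sigma$ (as vertex sets) and such that $\sigma * \tau \in L$, i.e., $\sigma \cup \tau$ spans a simplex of $L$. In particular, a set of vertices $\{v_0, \dots, v_k\}$ of $\lk(\sigma, L)$ spans a $k$-simplex of $\lk(\sigma,L)$ if and only if $\sigma \cup \{v_0,\dots,v_k\}$ spans a simplex of $L$.

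The key step is then the following. Suppose $v_0, \dots, v_k$ are pairwise adjacent vertices of $\lk(\sigma, L)$; I must show they span a $k$-simplex there. Let $\sigma$ have vertex set $\{w_0, \dots, w_m\}$. I claim that the full vertex set $V = \{w_0,\dots,w_m, v_0,\dots,v_k\}$ is a set of pairwise adjacent vertices of $L$. Indeed: the $w_i$ are pairwise adjacent since they span $\sigma$; each $v_j$ is a vertex of $\lk(\sigma, L)$, so $\sigma * \{v_j\}$ is a simplex of $L$, hence $v_j$ is adjacent to every $w_i$; and the $v_j$ are pairwise adjacent by hypothesis (adjacency in a subcomplex is the same as adjacency in $L$, since an edge lies in $\lk(\sigma,L)$ iff its endpoints together with $\sigma$ span a simplex — and for two vertices $v_j, v_{j'}$ of the link this is exactly the condition defining an edge of the link). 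Since $L$ is flag, the pairwise adjacent set $V$ spans a simplex of $L$. This simplex contains $\sigma$ as a face and has the remaining vertices $v_0,\dots,v_k$, so by the characterization above, $\{v_0,\dots,v_k\}$ spans a $k$-simplex of $\lk(\sigma,L)$. Therefore $\lk(\sigma,L)$ is flag.

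The only subtlety — and the one point I would be careful to spell out rather than hand-wave — is the claim that adjacency of two vertices inside the subcomplex $\lk(\sigma,L)$ agrees with their adjacency in $L$; equivalently, that the $1$-skeleton of $\lk(\sigma,L)$ is an induced subgraph of the $1$-skeleton of $L$ on the vertex set of the link. This is immediate once one unwinds the definition: an edge $\{v_j, v_{j'}\}$ lies in $\lk(\sigma,L)$ precisely when $\sigma \cup \{v_j, v_{j'}\}$ spans a simplex of $L$; a priori this is stronger than $\{v_j, v_{j'}\}$ merely spanning an edge of $L$, but when $v_j$ and $v_{j'}$ are already known to be vertices of $\lk(\sigma,L)$ and $L$ is flag, the set $\sigma \cup \{v_j, v_{j'}\}$ is pairwise adjacent (by the same three observations as above) and hence spans a simplex — so the two notions coincide. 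Thus no genuine obstacle arises; the lemma is essentially a bookkeeping exercise with the definition of flag, and the flag hypothesis on $L$ is used exactly once, to promote "pairwise adjacent" to "spans a simplex."
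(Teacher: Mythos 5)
Your proof is correct and is essentially the paper's argument: both reduce to the observation that the vertices of $\sigma$ together with pairwise adjacent vertices of $\lk(\sigma,L)$ are pairwise adjacent in $L$, so flagness of $L$ produces a simplex of the form $\sigma*\tau$, forcing $\tau\in\lk(\sigma,L)$. The only difference is presentational: the paper phrases it contrapositively via a minimal empty simplex, while you verify the flag condition directly (and your ``subtlety'' about induced edges is the easy direction, since $\lk(\sigma,L)$ is a subcomplex of $L$).
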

\begin{proof}
Let $\eta\subset \lk(\sigma,L)$ be a minimal empty simplex. Since $L$ is flag there is a simplex $\tau\in L$ such that $\eta=\partial \tau$. We need to show that $\tau\in \lk(\sigma,L)$. Now $\sigma*\partial \tau \cup \tau=\partial \rho$ for some $\rho\in L$ since $L$ is flag. But then $\rho=\sigma*\tau$, implying that $\tau\in \lk(\sigma,L)$.
\end{proof}
\begin{lemma}\label{l:link}
A simplicial complex $L$ is flag if and only if for each simplex $\sigma$ in $L$ (including the empty simplex) its link $\lk(\sigma,L)$ contains no empty triangle.
\end{lemma}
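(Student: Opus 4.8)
The plan is to prove the two implications separately. The forward direction will follow almost immediately from Lemma~\ref{l:flaglink}, while the converse requires a short induction on the dimension of a ``bad'' simplex.

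Suppose first that $L$ is flag. The link of the empty simplex is $L$ itself, and by Lemma~\ref{l:flaglink} the link $\lk(\sigma,L)$ of every nonempty simplex $\sigma$ is again flag. So it suffices to observe that a flag complex can contain no empty triangle: three pairwise adjacent vertices span a $2$--simplex by the very definition of flagness, so any subcomplex isomorphic to the boundary of a triangle is in fact filled. This settles one direction.

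For the converse I would argue by contradiction. Among all pairs $(K,\tau)$ in which $K$ is a simplicial complex satisfying the stated link condition, $K$ is not flag, $\tau\notin K$, and $\partial\tau\subseteq K$, choose one with $\dim\tau$ minimal; write $K=L$ and set $d=\dim\tau$, so $d\ge 2$. If $d=2$, then $\partial\tau$ is an empty triangle in $L=\lk(\emptyset,L)$, contradicting the hypothesis. If $d\ge 3$, write $\tau=v_0*\tau'$ with $v_0$ a vertex of $\tau$ and $\tau'$ the opposite codimension-one face; minimality of $d$ forces every proper face of $\tau$ to lie in $L$. I would then check two things: (i) $\partial\tau'\subseteq\lk(v_0,L)$, since each proper face of $\tau'$ joined with $v_0$ is a proper face of $\tau$ and hence lies in $L$; and (ii) $\tau'\notin\lk(v_0,L)$, since $v_0*\tau'=\tau\notin L$. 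Thus $\lk(v_0,L)$ is not flag, witnessed by the simplex $\tau'$ of dimension $d-1$. Finally I would verify that $\lk(v_0,L)$ still satisfies the link condition: for any simplex $\rho\in\lk(v_0,L)$ one has $\lk(\rho,\lk(v_0,L))=\lk(v_0*\rho,L)$, which contains no empty triangle by the hypothesis on $L$, and $\lk(\emptyset,\lk(v_0,L))=\lk(v_0,L)$ contains no empty triangle by the hypothesis applied to the vertex $v_0$. Hence $(\lk(v_0,L),\tau')$ is another pair of the type being minimized, with $\dim\tau'=d-1<d$, contradicting the choice of $d$.

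The one point I expect to require genuine care is the inductive bookkeeping in the last step: one must confirm that the witnessing bad simplex really can be taken to be $\partial\tau'$ itself, so that its dimension drops by exactly one, and that the hypothesis ``every link, the whole complex included, has no empty triangle'' is faithfully inherited by $\lk(v_0,L)$ through the identity $\lk(\rho,\lk(v_0,L))=\lk(v_0*\rho,L)$. Everything else is routine verification. One could instead phrase the induction on $\dim L$, but minimizing over the class of complexes satisfying the hypothesis keeps the argument self-contained and avoids a separate base case.
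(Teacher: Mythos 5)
Your proof is correct and takes essentially the same route as the paper: the forward direction via Lemma~\ref{l:flaglink}, and the converse by passing to the link of a vertex of a minimal bad simplex, using $\lk(\rho,\lk(v_0,L))=\lk(v_0*\rho,L)$ to see that the hypothesis is inherited --- the paper organizes this as an induction on $\dim L$ rather than your minimal-counterexample on $\dim\tau$, a purely cosmetic difference. The only point to tighten is your assertion $d\ge 2$: as literally defined your class could contain pairs where $\tau$ is a missing edge, so the minimization should be restricted to $\tau$ of dimension at least $2$ (equivalently, to genuine witnesses of non-flagness, which exist in any non-flag complex since a minimal pairwise-adjacent non-spanning vertex set has all proper faces present).
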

\begin{proof}
If $L$ is flag and $\sigma\in L$, then $\lk(\sigma,L)$ is flag by the preceding result, and hence contains no empty triangle.

For the converse assume that neither $L$ nor any link $\lk(\sigma,L)$ contains an empty triangle. We must show that $L$ is flag. To this end we proceed by induction (on dimension, say). Let $v_{0},\dots, v_{n}$ be vertices spanning a minimal non--simplex. By hypothesis we may assume that $n\ge 3$. Consider $\lk(v_{0},L)$. Note that $v_{1},\dots, v_{n}\in \lk(v_{0},L)$. Also all the edges $v_{i}v_{j}$ ($1\le i,j\le n$) lie in $\lk(v_{0},L)$, since  $v_{0}v_{i}v_{j}$ is a triangle of $L$ by the minimality hypothesis. By hypothesis $\lk(v_{0},L)$ contains no empty triangles, hence by induction $\lk(v_{0},L)$ is flag. Thus $v_{1}\cdots v_{n}$ is an $n$--simplex of $\lk(v_{0},L)$. But then $v_{0}v_{1}\cdots v_{n}$ is a simplex of $L$, as required.
\end{proof}

\begin{theorem}\label{t:flag}
A simple homotopy $n$--cell  is a  Haken homotopy $n$--cell if and only if the dual simplicial $\GHS^{n-1}$ is flag.
\end{theorem}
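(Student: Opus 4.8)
The plan is to prove the two implications separately, using Lemma~\ref{l:link} to translate the flag condition into the statement that every link contains no empty triangle, and then matching that against the inductive structure of a Haken homotopy cell.

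First I would set up the dictionary. Let $c$ be a simple homotopy $n$--cell and $L=L_c$ its dual $\GHS^{n-1}$. By Subsection~\ref{ss:cells}, the faces $S_\gs$ of $c$ correspond to simplices $\gs\in L$, with a codimension-$k$ face corresponding to an $(n-k-1)$--simplex, and the dual $\GHS^{k-1}$ of the face $S_\gs$ is exactly $\lk(\gs,L)$. Note that $\lk(\emptyset,L)=L$ itself corresponds to the top face $c$. So ``every face of $c$ and its iterated faces has dual with no empty triangle'' is, under this dictionary, precisely ``$\lk(\gs,L)$ contains no empty triangle for every $\gs\in L$ including $\gs=\emptyset$,'' which by Lemma~\ref{l:link} is equivalent to $L$ being flag.

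For the forward implication, suppose $c$ is Haken. By definition this is an inductive condition: $c$ has a complete useful boundary pattern and each facet is itself a Haken homotopy cell, hence by induction each facet's dual is flag, and so on down through all iterated faces. The Proposition preceding Lemma~\ref{l:flaglink} says exactly that the $1$--skeleton of the dual sphere of each face $S_\gs$ (that is, of $\lk(\gs,L)$) contains no empty triangle. Since this holds for all $\gs\in L$, including $\gs=\emptyset$, Lemma~\ref{l:link} gives that $L$ is flag. For the reverse implication, suppose $L$ is flag. Then by Lemma~\ref{l:flaglink} every link $\lk(\gs,L)$ is flag, hence (by Lemma~\ref{l:link} applied to the link, or directly) contains no empty triangle. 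I need to check that ``flag'' translates back into the useful boundary pattern conditions. For a simply connected manifold the useful conditions were reduced in Subsection on useful boundary patterns to: each facet simply connected; the intersection of any two facets connected; and three facets with pairwise nonempty intersection have common intersection. The first holds because faces of a simple homotopy cell are contractible; the second, because connectivity of $S_\gs\cap S_\tau$ corresponds to $\lk$ of the edge $\gs\tau$ being nonempty/connected, which flagness controls; the third is precisely the no-empty-triangle condition at the level of vertices of $L$. One then runs the induction: flagness of $L$ passes to flagness of each $\lk(v,L)=L_{F_v}$ for a facet $F_v$, so each facet is again a simple homotopy cell with flag dual, hence Haken by the inductive hypothesis, making $c$ Haken.

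I expect the main obstacle to be the reverse implication, specifically verifying carefully that the combinatorial flag condition on $L$ really does deliver all three clauses of ``useful'' for the boundary pattern of $c$ (and not merely the vertex-level empty-triangle statement), including the relative/2-disk formulations, and confirming that the inductive descent is well-founded --- i.e., that passing to the link strictly decreases dimension while preserving the $\GHS$ and flag properties (the former is the Remark's observation about links of homology manifolds, the latter is Lemma~\ref{l:flaglink}). The forward direction should be essentially a repackaging of the Proposition and Lemma~\ref{l:link}.
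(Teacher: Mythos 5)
Your proposal is correct in outline and follows the same overall route as the paper: the same dictionary between faces $S_\gs$ of the cell and simplices $\gs$ of $L$ (with dual spheres of faces identified with links), the forward implication obtained by combining the Proposition preceding Lemma~\ref{l:flaglink} with Lemma~\ref{l:link}, and the reverse implication by induction on the facets using Lemma~\ref{l:flaglink}. The only real divergence is how usefulness of the boundary pattern is discharged in the reverse direction. You verify the three clauses of the simply connected criterion directly; this is legitimate (a homotopy cell is simply connected, so the paper's reduced form of ``useful'' applies, and your worry about the general relative $2$--disk formulations is therefore unnecessary), but one attribution is off: clause (2), connectivity of the intersection of two facets, is not ``controlled by flagness'' --- it is built into the structure of a simple homotopy cell, where a nonempty intersection of $k$ facets is the single contractible stratum $S_\gs$ dual to a $(k-1)$--simplex of $L$; flagness is only needed for clause (3), the no-empty-triangle condition. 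The paper handles this step by a different, purely combinatorial verification: it shows that the primal boundary cell structure has no $3$--cycles in its $1$--skeleton, since three top-dimensional simplices of $L$ pairwise sharing $(n-2)$--faces would have pairwise adjacent vertices, and flagness would then force an $n$--simplex inside the $(n-1)$--dimensional complex $L$, a contradiction. So your clause-by-clause check and the paper's counting argument are two ways of completing the same inductive step: yours stays closer to the stated definition of a useful boundary pattern, while the paper's recasts the needed condition entirely as a statement about the flag triangulation $L$.
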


\begin{proof}
First suppose $M$ is a Haken homotopy $n$--cell. We need to argue that the  simplicial $(n-1)$-sphere $L$ dual to the boundary complex of $M$ is flag. It is part of the definition of a Haken homotopy $n$--cell that the simplicial dual of the boundary complex contains no empty triangle in its 1-skeleton. Since all faces of a Haken homotopy cell are themselves Haken homotopy cells there are no empty triangles in $\lk(\sigma, L)$ for any simplex $\sigma$ in $L$. By Lemma \ref{l:link} this implies $L$ is flag, as required.

Second suppose $M$ is a simple, homotopy $n$--cell with a simple regular homotopy cell complex structure on its boundary $(n-1)$--sphere, for which the dual simplicial generalized sphere $L$ is flag. We may assume that $n\ge 3$.  

The facets of $M$ are simple cells whose boundaries are duals of links of vertices, hence also flag by Lemma \ref{l:flaglink}. Therefore by induction the facets are Haken homotopy cells.

By definition the dual simplicial generalized sphere contains no empty triangles. We need to check that the given simple regular homotopy cell complex structure contains no triangles in its 1-skeleton. Such a 3-cycle would correspond to a triple $\sigma_{0}, \sigma_{1}, \sigma_{2}$ of $(n-1)$--simplices such that each intersection $\sigma_{i}\cap\sigma_{j}$ is an $(n-2)$--simplex ($i\ne j)$. Each $(n-1)$--simplex has $n$ vertices; each such pair of $(n-1)$-simplices has $n-1$ vertices in common. It follows that all three $(n-1)$--simplices have $n-2$ vertices in common. Then the set of all vertices involved in the three $(n-1)$-simplices has the property that each two vertices span an edge. The flag condition implies that these $n+1$ vertices span an $n$--simplex, contradicting the fact that we had a flag triangulation of a generalized $(n-1)$--sphere.
\end{proof}

\subsection{Some examples}\label{ss:exlocally amples} 
We describe a wide class of locally $\cat(0)$ manifolds in all dimensions that are Haken. Related discussion appears in \cite[Section 5]{FoozwellRubinstein2011}.
In contrast we point out examples of Haken manifolds that do not support locally $\cat(0)$ metrics. Finally, we indicate some standard examples of closed aspherical manifolds in higher dimensions that are not generalized Haken, even virtually.

\subsubsection{Locally $\cat(0)$-manifolds that are Haken}
We outline a general process of for imposing a Haken or generalized Haken structure on a closed manifold $M$ with a locally $\cat(0)$ cubical structure. The process always succeeds when the cubical structure on  $M$ arises from the action of a right-angled Coxeter group  $W$  associated with the dual Haken homotopy $n$--cell $X$ (also called a ``mirrored space'') corresponding to a  flag triangulation  $L$ of a $\GHS^{n-1}$. 
As in \cite{Davis2008} there is  a  cubical $\cat(0)$ structure on a  manifold $\mathcal{U}(W,X)$ with a free, cocompact action of $W$. Choosing a normal, torsion-free, finite index subgroup $\Gamma < W$, one obtains a closed aspherical, locally $\cat(0)$ manifold $M = \mathcal{U}(W,X)/\Gamma$. Such a manifold $M$ can be seen to be Haken, as we now explain in somewhat greater generality.

Suppose $M$ is a closed $n$-manifold with a locally $\cat(0)$, cubical structure.  Since $M$ is a polyhedral homology manifold, the link of each vertex is a $\GHS^{n-1}$, and since $M$ is an actual manifold  the link of each vertex is simply connected (assuming $n\ge 3$).  The universal cover $\wt{M}$ is a $\cat(0)$ cube complex.  The coordinate hyperplanes in each cube, extend to ``hyperplanes'' in the universal cover $\wt{M}$.  
The hyperplanes, and the intersections of hyperplanes, inherit a $\cat(0)$ cubical structure from $\wt{M}$. In general, these hyperplanes need only be homology submanifolds of codimension one; however, if the link of each cubical face
 is a simplicial sphere, then any hyperplane (as well as any intersection of hyperplanes) is an actual locally flat submanifold.  The image of a hyperplane in $M$ need not be an embedded homology submanifold (a ``hypersurface''); however, in many cases hypersurfaces are embedded.  For example, if the cubical structure comes from a cocompact action of a right-angled Coxeter group $W$ on $\wt{M}$ and if $\gG=\pi_1(M)$ is a normal, torsion-free subgroup of finite index in $W$, then the hypersurfaces are embedded by a lemma of Millson and Jaffee (cf.~\cite[Lemma 14.1.8]{Davis2008}). 
When the hypersurfaces are embedded, they can be used to define a hierarchy for $M$ (in a generalized sense where the hypersurfaces are only required to be homology submanifolds).   The ``cells'' at the end of the hierarchy are stars of vertices in the barycentric subdivision of the cubical complex, i.e., they are dual cones.  (When the links of vertices are simplicial spheres, these dual cones are actual simple cells.)  In the general case, one can replace each dual cone by its resolution by a homotopy cell (cf. \cite{Cohen1970, Sullivan1971}). The result is a manifold, which is homeomorphic to $M$, together with a collection of embedded hypersurfaces which are actual submanifolds.  The end of the hierarchy is  the collection of homotopy cells obtained by resolving the dual cones. 

\subsubsection{Manifolds that are Haken but not locally $\cat(0)$}
Many examples of Haken manifolds are not related to any locally $\cat(0)$ cubical structure.  If $\pi:M\to B$ is the projection map of a fiber bundle with fiber $\Sigma$ and if the base and fiber are both  Haken manifolds or generalized Haken manifolds, then so is $M$.  One easily constructs a hierarchy for $M$ from hierarchies for $B$ and $\Sigma$. 
To see this, note that if $F$ is  a hypersurface in $B$,   then $\pi^{-1}(F)$ is a hypersurface in $M$.  Hence, the inverse image of a hierarchy for $B$ yields the beginning of a hierarchy for $M$ which cuts $M$ into a disjoint union of manifolds of the form $\Sigma\times c$ where $c$ is a homotopy cell from the end of the hierarchy for $B$.  A hierarchy for $\Sigma$ then gives a hierarchy for $M$.  

If the bundle is not trivial, then even when the base and fiber have  locally $\cat(0)$ cubical structures one  cannot expect   $M$ to have such a structure. For example, if $M$ is an oriented $S^1$-bundle over $B$ and its Euler class in $H^2(B;\zz)$ does not have finite order, then $M$ does not admit a locally $\cat(0)$-metric.  (cf.~\cite[Theorem II.6.12]{BridsonHaefliger1999} and \cite[Lemma 12.1]{FrigerioLafontSisto2011}.)  

Another class of such examples arises from solvmanifolds.  Since any solvmanifold can be constructed via an iterated sequence of torus bundles, starting from a torus,  
solvmanifolds are Haken. However, if the fundamental group of the solvmanifold is not virtually  free abelian, then it does not admit a locally $\cat(0)$ metric (cf. the Solvable Subgroup Theorem \cite[Theorem II.7.8]{BridsonHaefliger1999}).

\subsubsection{Non-Haken aspherical manifolds}
Examples include irreducible, locally symmetric spaces of rank greater than 1.  On the one hand the fundamental group of an irreducible, locally symmetric space of rank greater than 1 has Kazhdan's Property T. For a general recent reference see the book of Bekka, de la Harpe, and Valette \cite{BekkadelaHarpeValette2008}.
On the other hand, the fundamental group of a Haken manifold splits as a nontrivial free product with amalgamation or as a nontrivial HNN extension. Such a splitting leads to an action of the group without a fixed point on a tree, which implies that the group does not have  Property T.  

\section{The Euler Characteristic Conjecture}\label{s:euler}
We apply the preceding work to the following fundamental conjecture about aspherical manifolds for a class of manifolds known as \emph{Haken $n$--manifolds}.
\begin{signconj}
If $M$ is a closed, aspherical manifold of even dimension $n = 2m$, then the Euler characteristic of $M$ satisfies $(-1)^m \chi(M) \geq 0.$
\end{signconj}
\noindent

The conjectured sign corresponds to the sign of the Euler characteristic of a product of $m$ surfaces of genus $g \ge 1$. This conjecture was first proposed as a question by W. Thurston in the 1970s. (See the Kirby problem set  \cite{Kirby1997}.) The first interesting and, in general, still unresolved case is in dimension 4.

Recall that we gave a formula \eqref{e:chorb} for the orbifold Euler characteristic as follows:
	\begin{equation*}
	\chorb(O(M))=\sum (1/2)^{\codim S} \chi(S,\partial S).
	\end{equation*} 
	
In terms of the nerve $L$, the orbifold Euler characteristic of a Haken $n$--manifold can be rewritten as:
	\[
	\chorb(O(M))=(-1)^n \sum_{\gs} (-1/2)^{\dim \gs +1} \chi (S_\gs)
	\]
where the sum is over all simplices in $L$, including the empty simplex.  If each stratum has Euler characteristic equal to 1 (e.g. if $M$ is a Haken homotopy $n$--cell) and $n$ is even, then this formula reads $\chorb(O(M))=\gl(L)$, where $\gl(L)$ is the \emph{Charney-Davis quantity} defined by
	\begin{equation*}\label{e:gl}
	\gl(L):= 1+\sum_{\gs\in L} (-1/2)^{\dim \gs +1}.
	\end{equation*}
So, Theorem~\ref{t:prehier} can be restated as follows.
\begin{theorem}\label{t:euler}
If $M$ is a closed generalized Haken $n$--manifold, $n=2m$, then 
\begin{equation*}
\chi(M) = \sum_{c} \lambda(L_{c})
\end{equation*}
where $c$ ranges over the Haken (homotopy) $n$--cells at the end of a hierarchy for $M$ and $L_{c}$ denotes the simplicial nerve associated with $c$.
\end{theorem}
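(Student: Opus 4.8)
The plan is to assemble Theorem~\ref{t:euler} by chaining together the three ingredients already in place: the cutting-invariance of the orbifold Euler characteristic (Theorem~\ref{t:prehier}), the relation between $\chorb$ and $\chi$ for the whole manifold $M$, and the identification of $\chorb(O(c))$ with $\gl(L_c)$ for each Haken homotopy cell $c$. First I would observe that since $M$ is a closed manifold, its only stratum is $M$ itself (of codimension $0$), so $O(M)=M$ and formula~\eqref{e:chorb} gives $\chorb(O(M)) = \chi(M,\partial M) = \chi(M)$. (Alternatively this is Remark~\ref{r:chiorbformula} with $m=0$.) Thus the left-hand side of the desired identity is exactly $\chorb(O(M))$.

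Next I would invoke Theorem~\ref{t:prehier}: a hierarchy for $M$ is in particular a prehierarchy, so
\[
\chorb(O(M)) = \sum_{c} \chorb(O(c)),
\]
the sum ranging over the Haken homotopy $n$--cells $c$ at the end of the hierarchy. It remains to show that for each such $c$ we have $\chorb(O(c)) = \gl(L_c)$ when $n=2m$ is even. Here I would use the rewriting of $\chorb$ in terms of the nerve,
\[
\chorb(O(c)) = (-1)^n \sum_{\gs} (-1/2)^{\dim\gs+1}\chi(S_\gs),
\]
the sum over all simplices $\gs$ of $L_c$ including the empty one. The point is that every stratum $S_\gs$ of a Haken homotopy $n$--cell is a compact contractible manifold (indeed a homotopy cell), hence $\chi(S_\gs)=1$ for all $\gs$; and since $n$ is even, $(-1)^n=1$. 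Therefore the displayed sum collapses to $1 + \sum_{\gs\in L_c}(-1/2)^{\dim\gs+1} = \gl(L_c)$ by the definition of the Charney--Davis quantity. Note that the contribution of the empty simplex (which has $\dim\gs = -1$, so $(-1/2)^{0}=1$ and $\chi(S_{\emptyset})=\chi(c)=1$) supplies exactly the leading $1$ in $\gl(L_c)$.

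Combining the three steps yields $\chi(M) = \chorb(O(M)) = \sum_c \chorb(O(c)) = \sum_c \gl(L_c)$, as claimed. There is really no serious obstacle here: the theorem is a repackaging of Theorem~\ref{t:prehier} once one unwinds the definitions. The only point requiring a moment's care is the bookkeeping of the empty simplex in passing between formula~\eqref{e:chorb} (sum over strata $S$) and the nerve formula (sum over simplices $\gs$, with $S_\gs$ the union of strata indexed by $\gs$ and the empty simplex corresponding to the ambient manifold); but this correspondence is exactly the one set up in Section~\ref{s:patterns}, and the $(-1/2)^{\dim\gs+1}$ weighting is precisely $(1/2)^{\codim S_\gs}$ up to the overall sign $(-1)^n$ coming from Poincar\'e duality as in \eqref{e:chorb2}. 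Finally I would remark that the faces $S_\gs$ being Haken homotopy cells is guaranteed by the inductive definition of a Haken homotopy cell, so the hypothesis $\chi(S_\gs)=1$ is automatic and no additional argument is needed.
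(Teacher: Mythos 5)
Your proposal is correct and follows essentially the same route as the paper, which presents Theorem~\ref{t:euler} as a direct restatement of Theorem~\ref{t:prehier} using the nerve formula for $\chorb$, the fact that $\chorb(O(M))=\chi(M)$ for closed $M$, and the observation that $\chi(S_\gs)=1$ for every stratum of a Haken homotopy cell so that $\chorb(O(c))=\gl(L_c)$ when $n$ is even. Your bookkeeping of the empty simplex and the sign $(-1)^n$ matches the paper's computation exactly.
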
	

By Theorem \ref{t:flag} the dual nerve of the boundary complex of a Haken $n$--cell or Haken homotopy $n$--cell is a flag complex.

The Charney-Davis Conjecture may be stated as follows.
\begin{conjecture} [Charney and Davis \cite{CharneyDavis1995a}]
Let $L$ be a flag triangulated $(2k-1)$-dimensional sphere (or generalized homology sphere). Then 
\(
(-1)^{k}\lambda(L) \ge 0.
\)
\end{conjecture}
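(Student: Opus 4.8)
The result is the celebrated Charney--Davis Conjecture, open for $k\ge 3$; what follows is therefore a program rather than a proof, organized around the two routes on which the partial progress rests. I would first record the low cases for calibration. For $k=1$ a flag triangulation of $S^1$ is a $p$-gon with $p\ge 4$, so $\lambda(L)=1-p/2+p/4=1-p/4$ and $(-1)^{1}\lambda(L)=p/4-1\ge 0$. For $k=2$, flag triangulations of $S^3$ or of a $\GHS^3$, the conjecture is the theorem of Davis and Okun established by the $L^2$ route below; this is precisely the ingredient the introduction invokes for closed Haken $4$-manifolds.

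\emph{Route 1: $\gamma$-vectors.} The plan is to convert $\lambda(L)$ into a single face-enumeration invariant and then prove nonnegativity of the latter. Writing the $f$-polynomial as $f(t)=\sum_{i=-1}^{2k-1} f_i t^{i+1}$ one checks directly that $\lambda(L)=f(-1/2)$; passing to the $h$-polynomial via $f(t)=\sum_i h_i t^i(1+t)^{2k-i}$ gives $\lambda(L)=2^{-2k}h(-1)$, and since $L$ is a generalized homology sphere its $h$-vector is palindromic, so $h(t)=\sum_{j} \gamma_j\, t^j(1+t)^{2k-2j}$ and evaluation at $t=-1$ yields $(-1)^k\lambda(L)=4^{-k}\gamma_k$. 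Thus the conjecture follows from Gal's conjecture that $\gamma_j(L)\ge 0$ for every flag generalized homology sphere and all $j$, and in fact only the middle coefficient $\gamma_k$ is needed. To prove $\gamma_k\ge 0$ I would seek a graded object --- a module over an Artinian reduction of a Stanley--Reisner-type ring, or a suitable cohomology --- whose $j$-th graded piece has dimension $\gamma_j$, together with a hard-Lefschetz pairing forcing nonnegativity, or, in the spirit of Nevo's work, reduce a general flag sphere to a controlled ``seed'' by local flag edge subdivisions under which $\gamma$ changes in a provably nonnegativity-preserving way.

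\emph{Route 2: $L^2$-cohomology.} Given a flag triangulation $L$ of a $\GHS^{2k-1}$, let $X$ be the Haken homotopy $2k$-cell dual to $L$ (Theorem \ref{t:flag}), let $W_L$ be the right-angled Coxeter group with nerve $L$, and let $\Sigma_L=\cu(W_L,X)$ be the Davis complex. Then $\Sigma_L$ is a contractible homology $2k$-manifold carrying a proper cocompact $W_L$-action whose orbifold quotient is the right-angled orbifold $O(X)$, and the Atiyah $L^2$-index theorem gives $\lambda(L)=\chorb(O(X))=\sum_{i=0}^{2k}(-1)^i\ltwob_i(\Sigma_L)$. If the Singer Conjecture holds for $\Sigma_L$, namely $\ltwob_i(\Sigma_L)=0$ for $i\ne k$, then $(-1)^k\lambda(L)=\ltwob_k(\Sigma_L)\ge 0$, so the plan is to prove this middle-dimensional concentration by induction over the simplices of $L$. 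Mayer--Vietoris applied to the mirror/cutting-open decomposition of $\Sigma_L$ expresses $\ltwob_*(\Sigma_L)$ in terms of the $L^2$-cohomology of sub-Davis-complexes assembled from the links $\lk(\gs,L)$, which are flag spheres of dimension $2k-\dim\gs-2$ (Lemma \ref{l:flaglink}), and one feeds in the conclusion in lower even dimensions together with $L^2$-Poincar\'e duality $\ltwob_i(\Sigma_L)=\ltwob_{2k-i}(\Sigma_L)$ and the vanishing $\ltwob_0(\Sigma_L)=0$ (since $W_L$ is infinite) to collapse everything into degree $k$. This is precisely the Davis--Okun scheme, and it closes when $2k\le 4$.

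\emph{Main obstacle.} Each route only transfers the difficulty to an equally open conjecture without supplying new leverage on an arbitrary flag sphere. In Route 1 there is no known Lefschetz-type object realizing $\gamma_k$, and algebraic shifting and rigidity inequalities control the $g$-vector but not the flag-specific $\gamma$-vector; in Route 2 the inductive step for $k\ge 3$ demands the vanishing of $\ltwob_i(\Sigma_L)$ in the intermediate degrees $1<i<k$, which is neither forced by Poincar\'e duality nor implied by the lower-dimensional cases --- this is exactly where the Davis--Okun induction halts past dimension $4$. I would therefore expect decisive progress only for structured subfamilies (barycentric subdivisions and related Coxeter-type complexes, nerves with large automorphism groups, or flag spheres admitting nonpositively curved cubical models) rather than the conjecture in its full generality.
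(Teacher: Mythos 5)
This statement is the Charney--Davis Conjecture itself, and the paper does not prove it: it is stated as an open conjecture (attributed to \cite{CharneyDavis1995a}), used only as a hypothesis in Corollary \ref{c:CDEuler}, with the sole known even-dimensional cases being $k=1$ and $k=2$, the latter being the separate Davis--Okun theorem quoted from \cite{DavisOkun2001}. Your proposal is therefore in the only defensible position: you correctly refrain from claiming a proof, and what you do assert is accurate --- the $k=1$ computation for a $p$-gon ($p\ge 4$) is right; the reformulation $(-1)^k\lambda(L)=\gamma_k(L)/4^k$ via the Dehn--Sommerville palindromy of the $h$-vector is the standard Gal reduction, so nonnegativity of the middle $\gamma$-coefficient is indeed equivalent to the conjecture; and the $L^2$-cohomology route through the Davis complex of the right-angled Coxeter group with nerve $L$, with $\lambda(L)$ appearing as an orbifold Euler characteristic and the Singer-type vanishing forcing the sign, is exactly the mechanism behind the $k=2$ case that this paper exploits in dimension $4$. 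Your diagnosis of where each route stalls (no Lefschetz-type object realizing $\gamma_k$; no control of intermediate $L^2$-Betti numbers $1<i<k$ beyond dimension $4$) matches the actual state of the art. In short, there is no gap to point out in your write-up beyond the one you name yourself, which is the same gap the paper lives with: the conjecture is open for $k\ge 3$, and nothing in the paper (or your proposal) closes it.
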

An immediate corollary of Theorems~\ref{t:prehier} and \ref{t:flag} is the following.
\begin{corollary}\label{c:CDEuler} 
The Charney-Davis Conjecture for generalized homology $(2k-1)$-spheres implies the Euler Characteristic Sign Conjecture for closed generalized Haken manifolds of  dimension $2k$. 
\end{corollary}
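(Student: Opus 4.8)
The plan is simply to assemble Theorems~\ref{t:euler} and~\ref{t:flag}, both of which are already in hand, by a one-line sign-counting argument. First I would fix a closed generalized Haken $n$--manifold $M$ with $n=2k$ and choose any hierarchy for it; let $c$ range over the Haken homotopy $n$--cells at the end of that hierarchy, and let $L_c$ denote the simplicial nerve of the boundary complex of $c$. Theorem~\ref{t:euler} then records that
\[
\chi(M)=\sum_c \lambda(L_c).
\]

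Next I would apply Theorem~\ref{t:flag} to each $c$: since $c$ is a Haken homotopy $n$--cell, its dual nerve $L_c$ is a \emph{flag} triangulation of a $\GHS^{n-1}$, i.e.\ of a generalized homology sphere of dimension $2k-1$. Invoking the hypothesized Charney--Davis Conjecture for such complexes gives $(-1)^k \lambda(L_c)\ge 0$ for every $c$. Multiplying the displayed identity by $(-1)^k$ and summing termwise yields
\[
(-1)^k \chi(M)=\sum_c (-1)^k \lambda(L_c)\ge 0,
\]
which is exactly the Euler Characteristic Sign Conjecture for $M$ (with $m=k$).

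I do not expect any real obstacle at this final step: all of the substance --- the invariance of $\chorb$ under cutting open along a hypersurface (Lemma~\ref{l:main}), its additivity over the pieces at the end of a hierarchy (Theorem~\ref{t:prehier}), and the identification of each dual nerve as a flag complex (Theorem~\ref{t:flag}) --- has already been carried out. The only points requiring a little care are bookkeeping: that for a closed $M$ one has $\chorb(O(M))=\chi(M)$, because $M$ has no facets and hence equals its own manifold double, and that for an even-dimensional homotopy cell $c$ every stratum is contractible, so $\chorb(O(c))$ collapses to $\lambda(L_c)$ --- both of which are recorded just before Theorem~\ref{t:euler}. One should also observe that although the hierarchy, and hence the set of cells $\{c\}$, is not canonical, the identity $\chi(M)=\sum_c\lambda(L_c)$ holds for every choice, so the conclusion is independent of it.
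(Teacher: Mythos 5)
Your proposal is correct and is essentially the paper's own argument: it combines Theorem~\ref{t:euler} (equivalently Theorem~\ref{t:prehier}) with Theorem~\ref{t:flag} and the hypothesized Charney--Davis inequality, summing over the cells at the end of a hierarchy. The bookkeeping points you flag ($\chorb(O(M))=\chi(M)$ for closed $M$ and $\chorb(O(c))=\gl(L_c)$ for an even-dimensional homotopy cell) are exactly those recorded in the paper just before Theorem~\ref{t:euler}, so nothing further is needed.
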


The Charney-Davis Conjecture is only known in the trivial case $k=1$ and the case $k=2$.
\begin{theorem} [Davis and  Okun \cite{DavisOkun2001}]
Let $L$ be a flag triangulated $3$--sphere (or homology $3$--sphere). Then 
\(
\lambda(L) \ge 0.
\)
\end{theorem}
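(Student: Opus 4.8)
The plan is to identify $\gl(L)$ with an $L^2$-Euler characteristic and then invoke the Singer Conjecture in dimension $4$ for the associated right-angled Coxeter group; this $L^2$-cohomological vanishing is the real content and is proved in \cite{DavisOkun2001}. Let $L$ be a flag triangulation of $S^3$ (the case of a flag $\GHS^3$ is handled the same way, using the resolution). First I would run the dual-cone/resolution construction of Subsection~\ref{ss:cells}: since $L$ is a flag $\GHS^3$ there is a Haken homotopy $4$-cell $X_L$ with dual nerve $L$ (Theorem~\ref{t:flag}), and its right-angled orbifold $O(X_L)$ has orbifold fundamental group the right-angled Coxeter group $W=W_L$ with nerve $L$ and orbifold universal cover the Davis complex $\Sigma_L = \cu(W_L, X_L)$, a contractible $4$-manifold carrying a proper cocompact $W_L$-action (contractibility and the manifold property follow from the theory in \cite{Davis2008}, using that $L$ is a $\GHS^3$). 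Since every stratum of $X_L$ is contractible and $n=4$ is even, the computation of $\chorb$ from \eqref{e:chorb} specializes to $\chorb(O(X_L)) = \gl(L)$, as noted before Theorem~\ref{t:euler}. Passing to a torsion-free subgroup $\Gamma$ of finite index in $W_L$ (Selberg's lemma) and applying Atiyah's $L^2$-index theorem to the closed aspherical $4$-manifold $\Sigma_L/\Gamma$ (cf.~\cite{Davis2008}) then yields
\[
\gl(L) \;=\; \chorb(O(X_L)) \;=\; \sum_{i=0}^{4} (-1)^{i}\,\ltwob_i(\Sigma_L),
\]
where the $\ltwob_i(\Sigma_L)$ are the reduced $L^2$-Betti numbers of $\Sigma_L$ with respect to $W_L$.

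Next I would reduce the inequality to a single vanishing statement. Poincar\'e duality for reduced $L^2$-cohomology (applied on $\Sigma_L/\Gamma$ and divided by $[W_L:\Gamma]$) gives $\ltwob_i(\Sigma_L) = \ltwob_{4-i}(\Sigma_L)$, while $\ltwob_0(\Sigma_L)=0$ because $W_L$ is infinite. Hence $\ltwob_0(\Sigma_L)=\ltwob_4(\Sigma_L)=0$, $\ltwob_1(\Sigma_L)=\ltwob_3(\Sigma_L)$, and
\[
\gl(L) \;=\; \ltwob_2(\Sigma_L) - 2\,\ltwob_1(\Sigma_L).
\]
So the theorem follows as soon as one establishes the Singer-type vanishing $\ltwob_1(\Sigma_L)=0$ (equivalently $\ltwob_3(\Sigma_L)=0$), i.e.\ the Singer Conjecture for the contractible $4$-manifold $\Sigma_L$ below its middle dimension; granting it, $\gl(L)=\ltwob_2(\Sigma_L)\ge 0$.

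Finally, to prove $\ltwob_1(\Sigma_L)=0$ I would argue by induction on the number of vertices of $L$, using an $L^2$-Mayer--Vietoris principle for Coxeter groups: whenever $L=A\cup B$ with $A$, $B$, and $A\cap B$ all full subcomplexes, one has $W_L\cong W_A *_{W_{A\cap B}} W_B$ and a corresponding $W_L$-equivariant decomposition of $\Sigma_L$ along translates of $\Sigma_{A\cap B}$, hence a Mayer--Vietoris sequence in reduced $L^2$-cohomology. Every vertex $v$ of $L$ has $\lk(v,L)$ a flag triangulation of $S^2$ (Lemma~\ref{l:flaglink}); excising the open star of $v$ presents $L$ via such a decomposition with one piece of strictly smaller complexity and overlap governed by $\Sigma_{\lk(v,L)}$, so the induction feeds on the Singer Conjecture in dimension $3$ for Davis complexes of flag triangulations of $S^2$ (namely $\ltwob_0=\ltwob_1=0$ there). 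That dimension-$3$ statement is itself a nontrivial theorem of \cite{DavisOkun2001}, proved by the analogous induction one dimension down — and genuinely so, since the corresponding vanishing fails for flag triangulations of $S^1$, so only carefully chosen intermediate complexes may be used. The main obstacle is precisely this inductive machinery in dimensions $3$ and $4$: keeping track of fullness of all the subcomplexes involved, treating flag triangulations of balls as well as of spheres, and verifying that the Mayer--Vietoris differentials really do force the relevant degree-one $L^2$-Betti numbers to vanish. That homological/combinatorial bookkeeping, rather than any isolated inequality, is where the difficulty concentrates, and it is carried out in \cite{DavisOkun2001}.
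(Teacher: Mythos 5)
The paper does not prove this statement at all: it is quoted verbatim as an external theorem of Davis and Okun \cite{DavisOkun2001}, so there is no internal proof to compare against. Your outline is, in fact, an accurate reconstruction of the architecture of the cited proof: identify $\gl(L)$ with the $L^2$/orbifold Euler characteristic of the contractible $4$-manifold $\cu(W_L,X_L)$ on which the right-angled Coxeter group $W_L$ acts properly and cocompactly, use Atiyah's formula, kill $\ltwob_0=\ltwob_4$ (infinite group plus Poincar\'e duality), and reduce the inequality to the Singer-type vanishing $\ltwob_1=\ltwob_3=0$, which Davis--Okun prove for these $4$-dimensional Davis complexes by an induction over full subcomplexes of $L$ with Mayer--Vietoris in reduced $L^2$-cohomology. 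Two caveats. First, your description of the dimension-$3$ input is off: Davis--Okun do not establish $\ltwob_1(\Sigma_{\lk(v)})=0$ for flag $2$-spheres by ``the analogous induction one dimension down''; in dimension $3$ they appeal to $3$-manifold theory, namely the Lott--L\"uck verification of the Singer Conjecture for Haken (geometric) $3$-manifolds, applied to suitable finite covers/doubles of the quotient orbifolds. Your parenthetical point that a naive induction would founder on flag triangulations of $S^1$ (where the middle-dimensional $L^2$-Betti number is nonzero) is exactly why a different input is needed there. Second, be clear that what you have written is an exposition, not a proof: the essential content ($\ltwob_1(\Sigma_L)=0$ in dimension $4$, and the dimension-$3$ vanishing it rests on) is deferred to \cite{DavisOkun2001}, which is the very source to which the theorem is attributed --- acceptable here, since the paper itself treats the result as a black box, but it means your proposal adds correct context rather than an independent argument.
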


Thus we have proved the following result.
\begin{corollary}
If $M$ is a closed generalized Haken $4$--manifold, then 
$\chi(M)   \ge 0$.
\end{corollary}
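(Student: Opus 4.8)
The plan is to chain together the three results that immediately precede this corollary in the text. The final statement asserts that a closed generalized Haken $4$--manifold $M$ has $\chi(M)\ge 0$, and every ingredient needed is already in hand, so the proof is a short deduction rather than a fresh argument.

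First I would invoke Theorem~\ref{t:euler} with $n=2m=4$, so $m=2$: since $M$ is a closed generalized Haken manifold of even dimension, a hierarchy for $M$ exists and terminates in a disjoint union of Haken homotopy $4$--cells $c$, and
\[
\chi(M) = \sum_{c} \lambda(L_{c}),
\]
where $L_c$ is the simplicial nerve (dual complex) of the boundary complex of $c$. Each $c$ is a simple homotopy $4$--cell whose boundary is a $3$--dimensional sphere or generalized homology sphere, so each $L_c$ is a triangulated $\GHS^{3}$ (in fact a triangulated $3$--sphere in the classical Haken case, but the argument does not care). Next I would apply Theorem~\ref{t:flag}: because each $c$ is a Haken homotopy $4$--cell, its dual $L_c$ is a \emph{flag} triangulated generalized homology $3$--sphere.

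Then I would apply the Davis--Okun theorem quoted just above: for any flag triangulated $3$--sphere or homology $3$--sphere $L$ one has $\lambda(L)\ge 0$. Applying this to each $L_c$ gives $\lambda(L_c)\ge 0$ for every cell $c$ at the end of the hierarchy. Summing over the finitely many such cells yields $\chi(M)=\sum_c \lambda(L_c)\ge 0$, which is exactly the assertion. One should note in passing that this is the $k=2$ instance of Corollary~\ref{c:CDEuler}: the Charney--Davis Conjecture for generalized homology $3$--spheres (which is the Davis--Okun theorem) implies the Euler Characteristic Sign Conjecture in dimension $4$, and $(-1)^2\chi(M)=\chi(M)$.

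There is essentially no obstacle here, since all the hard work has been done: Theorem~\ref{t:euler} (which repackages Theorem~\ref{t:prehier} together with the orbifold Euler characteristic computations of Section~\ref{s:patterns}), the combinatorial characterization Theorem~\ref{t:flag}, and the deep Davis--Okun positivity result are all being cited as black boxes. The only points that require even a moment's care are bookkeeping ones: confirming that the cells at the end of a hierarchy are genuinely \emph{Haken} homotopy cells (so that Theorem~\ref{t:flag} applies) — this is built into the definition of a hierarchy — and confirming that the Davis--Okun theorem is stated for the generalized homology sphere case and not merely for $PL$ spheres, which the excerpt's phrasing ``(or homology $3$--sphere)'' makes explicit. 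Hence the corollary follows formally.
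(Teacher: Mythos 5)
Your proof is correct and is exactly the paper's argument: the corollary is obtained by chaining Theorem~\ref{t:euler}, the flagness of the duals from Theorem~\ref{t:flag}, and the Davis--Okun theorem (equivalently, the $k=2$ case of Corollary~\ref{c:CDEuler}). Nothing differs in substance from the route the paper intends by ``Thus we have proved the following result.''
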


\bibliographystyle{amsalpha}
\bibliography{Davis-Edmonds}

\providecommand{\bysame}{\leavevmode\hbox to3em{\hrulefill}\thinspace}
\providecommand{\MR}{\relax\ifhmode\unskip\space\fi MR }
\providecommand{\MRhref}[2]{%
  \href{http://www.ams.org/mathscinet-getitem?mr=#1}{#2}
}
\providecommand{\href}[2]{#2}
\begin{thebibliography}{BdlHV08}

\bibitem[BdlHV08]{BekkadelaHarpeValette2008}
Bachir Bekka, Pierre de~la Harpe, and Alain Valette, \emph{Kazhdan's property
  ({T})}, New Mathematical Monographs, vol.~11, Cambridge University Press,
  Cambridge, 2008. \MR{2415834 (2009i:22001)}

\bibitem[BH99]{BridsonHaefliger1999}
Martin~R. Bridson and Andr{\'e} Haefliger, \emph{Metric spaces of non-positive
  curvature}, Grundlehren der Mathematischen Wissenschaften [Fundamental
  Principles of Mathematical Sciences], vol. 319, Springer-Verlag, Berlin,
  1999. \MR{1744486 (2000k:53038)}

\bibitem[CD95]{CharneyDavis1995a}
Ruth Charney and Michael Davis, \emph{The {E}uler characteristic of a
  nonpositively curved, piecewise {E}uclidean manifold}, Pacific J. Math.
  \textbf{171} (1995), no.~1, 117--137. \MR{1362980 (96k:53066)}

\bibitem[Coh70]{Cohen1970}
Marshall~M. Cohen, \emph{Homeomorphisms between homotopy manifolds and their
  resolutions}, Invent. Math. \textbf{10} (1970), 239--250. \MR{0275439 (43
  \#1195)}

\bibitem[Dav]{Davis2013}
M.~W. Davis, \emph{When are two {C}oxeter orbifolds diffeomorphic?},
  arXiv:1306.6946 [math.GT].

\bibitem[Dav08]{Davis2008}
Michael~W. Davis, \emph{The geometry and topology of {C}oxeter groups}, London
  Mathematical Society Monographs Series, vol.~32, Princeton University Press,
  Princeton, NJ, 2008. \MR{2360474 (2008k:20091)}

\bibitem[DO01]{DavisOkun2001}
Michael~W. Davis and Boris Okun, \emph{Vanishing theorems and conjectures for
  the {$\ell^2$}-homology of right-angled {C}oxeter groups}, Geom. Topol.
  \textbf{5} (2001), 7--74. \MR{1812434 (2002e:58039)}

\bibitem[Edm]{Edmonds2013}
Allan~L. Edmonds, \emph{The {E}uler characteristic of a {H}aken 4--manifold},
  arXiv:1306.2616 [math.GT].

\bibitem[FLS11]{FrigerioLafontSisto2011}
R.~{Frigerio}, J.-F. {Lafont}, and A.~{Sisto}, \emph{{Rigidity of high
  dimensional graph manifolds}}, arXiv1107.2019F [math.DG] (2011).

\bibitem[Foo]{Foozwell2011}
Bell Foozwell, \emph{The universal covering space of a {H}aken $n$--manifold},
  arXiv:1108.0474 [math.GT].

\bibitem[Foo07]{Foozwell2007}
Bell Foozwell, \emph{{H}aken $n$--manifolds}, Ph.D. thesis, University of
  Melbourne, 2007.

\bibitem[FQ90]{FreedmanQuinn1990}
Michael~H. Freedman and Frank Quinn, \emph{Topology of 4--manifolds}, Princeton
  Mathematical Series, vol.~39, Princeton University Press, Princeton, NJ,
  1990. \MR{1201584 (94b:57021)}

\bibitem[FR]{FoozwellRubinstein2012}
Bell Foozwell and Hyam Rubinstein, \emph{Four-dimensional {H}aken cobordism
  theory}, arXiv:1212.6301 [math.GT].

\bibitem[FR11]{FoozwellRubinstein2011}
\bysame, \emph{Introduction to the theory of {H}aken {$n$}--manifolds},
  Topology and geometry in dimension three, Contemp. Math., vol. 560, Amer.
  Math. Soc., Providence, RI, 2011, pp.~71--84. \MR{2866924 (2012i:57040)}

\bibitem[Joh79]{Johannson1979}
Klaus Johannson, \emph{Homotopy equivalences of {$3$}-manifolds with
  boundaries}, Lecture Notes in Mathematics, vol. 761, Springer, Berlin, 1979.
  \MR{551744 (82c:57005)}

\bibitem[Joh94]{Johannson1994}
\bysame, \emph{On the loop- and sphere theorem}, Low-dimensional topology
  ({K}noxville, {TN}, 1992), Conf. Proc. Lecture Notes Geom. Topology, III,
  Int. Press, Cambridge, MA, 1994, pp.~47--54. \MR{1316171 (96a:57031)}

\bibitem[Ker69]{Kervaire1969}
Michel~A. Kervaire, \emph{Smooth homology spheres and their fundamental
  groups}, Trans. Amer. Math. Soc. \textbf{144} (1969), 67--72. \MR{0253347 (40
  \#6562)}

\bibitem[Kir97]{Kirby1997}
Rob Kirby, \emph{Problems in low-dimensional topology}, Geometric topology
  ({A}thens, {GA}, 1993), AMS/IP Stud. Adv. Math., vol.~2, Amer. Math. Soc.,
  Providence, RI, 1997, pp.~35--473. \MR{1470751}

\bibitem[Sul71]{Sullivan1971}
D.~Sullivan, \emph{Singularities in spaces}, Proceedings of {L}iverpool
  {S}ingularities {S}ymposium, {II} (1969/1970) (Berlin), Springer, 1971,
  pp.~196--206. Lecture Notes in Math., Vol. 209. \MR{0339241 (49 \#4002)}

\end{thebibliography}

\end{document}